\newcommand{\lra}{\longrightarrow}
\newcommand{\ov}{\overline}
\newcommand{\col}{\colon}
\newcommand{\dd}{\partial}
\newcommand{\fk}{{\mathsf k}}
\newcommand{\fm}{{\mathfrak m}}
\newcommand{\ann}{\operatorname{ann}}
\newcommand{\bd}{\boldsymbol}
\newcommand{\Ima}{\operatorname{Im}}
\newcommand{\gr}{\operatorname{gr}}
\newcommand{\rank}{\operatorname{rank}}
\newcommand{\Tor}{\operatorname{Tor}}
\newcommand{\Ext}{\operatorname{Ext}}
\newcommand{\Hom}{\operatorname{Hom}}
\newcommand{\Po}{\operatorname{P}}
\newcommand{\Eo}{\operatorname{E}}
\newcommand{\To}{\operatorname{T}}
\theoremstyle{plain}
\newtheorem{theorem}{Theorem}[section]
\newtheorem*{Theorem}{Theorem}
\newtheorem*{Main Theorem}{Main Theorem}
\newtheorem{proposition}[theorem]{Proposition}
\newtheorem{lemma}[theorem]{Lemma}
\newtheorem{corollary}[theorem]{Corollary}
\theoremstyle{definition}
\newtheorem{chunk}[theorem]{}
\newtheorem{remark}[theorem]{Remark}
\theoremstyle{remark}
\newenvironment{bfchunk}{\begin{chunk}\textit}{\end{chunk}}
\numberwithin{equation}{theorem}
\newtheorem*{Case1}{Case 1}
\newtheorem*{Case2}{Case 2}
\begin{document}
\title[Cohomology over short Gorenstein rings]{Cohomology of  finite modules \\over short Gorenstein rings}

\author[M.~C.~Menning]{Melissa C.~Menning}
\address{Melissa~Menning\\ Department of Mathematics and Statistics\\
   University of Missouri\\ \linebreak Kansas City\\ MO 64110\\ U.S.A.}
     \email{mcm146@mail.umkc.edu}

\author[L.~M.~\c{S}ega]{Liana M.~\c{S}ega}
\address{Liana M.~\c{S}ega\\ Department of Mathematics and Statistics\\
   University of Missouri\\ \linebreak Kansas City\\ MO 64110\\ U.S.A.}
     \email{segal@umkc.edu}

\subjclass[2000]{Primary 13D07. Secondary 13D02, 13H10}
\keywords{}
\thanks{This work was supported by a grant from the Simons Foundation (\# 354594, Liana \c Sega)  and a research grant from the UMKC School of Graduate Studies (Melissa Menning)}

\begin{abstract}
Let $R$ be a Gorenstein  local ring with maximal ideal $\fm$ satisfying $\fm^3=0\ne\fm^2$. Set $\fk=R/\fm$ and $e=\rank_{\fk}(\fm/\fm^2)$.  If  $e>2$ and $M$, $N$ are finitely generated $R$-modules, we show that the formal power series $$\sum_{i=0}^\infty\rank_{\fk}\left(\Ext^i_R(M,N)\otimes_R\fk \right)t^i\quad\text{and}\quad  \sum_{i=0}^\infty\rank_{\fk}\left(\Tor_i^R(M,N)\otimes_R \fk \right)t^i$$
are rational, with denominator $1-et+t^2$. 
\end{abstract}

\maketitle

\section*{Introduction}
Let $(R,\fm,\fk)$ be a Noetherian commutative local ring;  $\fm$ denotes the maximal ideal and $\fk=R/\fm$. If $L$ is an $R$-module, we set $\nu(L)=\rank_{\fk}(L/\fm L)$. 
 Let $M$ and $N$ be finite (meaning finitely generated) $R$-modules.

We consider the formal power series 
$$
\Eo^{M,N}_R(t)=\sum_{i=0}^\infty\nu\left(\Ext^i_R(M,N)\right)t^i\quad\text{and}\quad \To^R_{M,N}(t)=\sum_{i=0}^\infty\nu\left(\Tor_i^R(M,N)\right)t^i\,.
$$
Note that $\Eo^{M,\fk}_R(t)=\To^R_{M,\fk}(t)=\To^R_{\fk,M}(t)$; this series  is usually called the {\it Poincar\'e series} of $M$, denoted $\Po_M^R(t)$. The series  $\Eo^{\fk,N}_R(t)$ is called the {\it Bass series} of $N$. 

Although rings with transcendental Poincar\'e series  exist, significant classes of rings $R$ are known to satisfy the property that the Poincar\'e series of all finite $R$-modules are rational, sharing a common denominator; see for example \cite{RossiSega} for a recent development.  If this property holds, then the Bass series of all finite $R$-modules are also rational, sharing a common denominator; see \cite[Lemma 1.2]{S1}. 

Less  is known about the series $\Eo^{M,N}_R(t)$ and $\To_{M,N}^R(t)$ for arbitrary $M$, $N$. If $\fm^2=0$, then it is an easy exercise to show that $\left(1-et\right)\cdot \Eo^{M,N}_R(t)\in \mathbb Z[t]$,  where $e=\nu(\fm)$.  When $R$ is a complete intersection of codimension $c$, Avramov and Buchweitz \cite[Proposition 1.3]{AB} showed that $(1-t^2)^c\cdot \Eo^{M,N}_R(t)\in \mathbb Z[t]$  for all finite $M$, $N$. 

We consider $R$ to be Gorenstein, with $\fm^3=0$.   In this case, Sj\"odin \cite{Sj} shows that Poincar\'e series of all finite $R$-modules are rational, sharing a common denominator.  We prove that Sj\"odin's result can be extended as follows: 

\begin{Theorem}
Let $(R,\fm,\fk)$ be a local  Gorenstein ring with $\fm^3=0\ne \fm^2$ and set $e=\nu(\fm)$. If $e>2$ and $M$, $N$ are finite $R$-modules, then  
$$(1-et+t^2)\cdot \Eo^{M,N}_R(t)\in \mathbb Z[t]\quad\text{and}\quad (1-et+t^2)\cdot \To^R_{M,N}(t)\in \mathbb Z[t]\,.$$
\end{Theorem}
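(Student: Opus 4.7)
The plan is to prove both assertions simultaneously by showing, for each pair of finite $R$-modules $M$ and $N$, that the two formal power series
$$\sum_{i\ge 0}\nu(\Ext^i_R(M,N))\,t^i \qquad\text{and}\qquad \sum_{i\ge 0}\dim_\fk\Soc(\Ext^i_R(M,N))\,t^i$$
are rational with denominator $1-et+t^2$. The Tor assertion will follow at once by Matlis duality: since $R$ is Artinian Gorenstein, hence self-injective, $N^\vee:=\Hom_R(N,R)$ realizes Matlis duality on finite modules with $\Tor^R_i(M,N)^\vee\cong \Ext^i_R(M,N^\vee)$, and the elementary identity $\dim_\fk\Soc(X^\vee)=\nu(X)$ gives
$$\nu(\Tor^R_i(M,N))\;=\;\dim_\fk\Soc(\Ext^i_R(M,N^\vee)),$$
so the Tor $\nu$-series for $(M,N)$ coincides with the Ext $\Soc$-series for $(M,N^\vee)$.

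The next step would be to extract structure from the minimal free resolution $F_\bullet\to M$. Minimality gives $\Omega^iM\subseteq \fm F_{i-1}$, so $\fm^2\,\Omega^iM\subseteq \fm^3 F_{i-1}=0$ for $i\ge 1$, i.e.\ every syzygy is annihilated by $\fm^2$. Using the Gorenstein identification $\fm^2=\Soc R\cong\fk$, a short computation yields the key socle identification
$$\Soc(\Omega^{i+1}M)\;=\;(0:_{\Omega^{i+1}M}\fm)\;=\;\fm^2 F_i\;\cong\;\fk^{b_i},$$
where $b_i=\rank_R F_i$ is the $i$-th Betti number of $M$. This produces a canonical short exact sequence
$$0\lra \fk^{b_i}\lra \Omega^{i+1}M\lra Q_{i+1}\lra 0$$
with $Q_{i+1}$ a $\fk$-subspace of $\fm F_i/\fm^2 F_i\cong\fk^{eb_i}$, exhibiting every high syzygy as a two-step extension of $\fk$-vector spaces.

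The main obstacle is the third step: lifting the scalar Chebyshev recurrence $b_{i+2}=e\,b_{i+1}-b_i$ (valid for $i\gg0$ by Sj\"odin's theorem $\Po^R_\fk(t)=1/(1-et+t^2)$) to a \emph{functorial} short exact sequence of syzygies whose associated long exact $\Ext_R(-,N)$ sequence yields three-term recurrences for both $\nu$ and $\dim\Soc$ applied to $\Ext^i_R(M,N)$. One expects a sequence schematically of the form
$$0\lra \Omega^{i+2}M\lra(\Omega^{i+1}M)^{\oplus e}\oplus K \lra \Omega^iM\lra 0$$
with $K$ a direct sum of copies of $\fk$, built by iterating the socle extension above and invoking the Gorenstein self-duality $\Omega^{-i}M\cong(\Omega^i(M^\vee))^\vee$ to link positive and negative syzygies. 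Applying $\Hom_R(-,N)$ and tracking $\nu$ and $\Soc$ through the resulting long exact sequence then converts the structural relation into the desired recurrences
$$\nu(\Ext^{i+2})-e\,\nu(\Ext^{i+1})+\nu(\Ext^{i})\;=\;0\;=\;\dim_\fk\Soc(\Ext^{i+2})-e\,\dim_\fk\Soc(\Ext^{i+1})+\dim_\fk\Soc(\Ext^{i})$$
for $i\gg 0$, which are equivalent to the polynomiality of $(1-et+t^2)$ times the respective series. The hypothesis $e>2$ is essential here: it underlies Sj\"odin's rationality with denominator exactly $1-et+t^2$ and ensures that the characteristic polynomial $x^2-ex+1$ has distinct roots, precluding degenerate lower-order corrections that would force a larger denominator.
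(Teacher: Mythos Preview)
Your proposal correctly sets up the duality reduction and the structural facts about syzygies (annihilation by $\fm^2$, the socle identification $\Soc(\Omega^{i+1}M)=\fm^2F_i$), but it has a genuine gap at exactly the point you yourself flag as ``the main obstacle.'' You do not construct the short exact sequence
$$0\lra \Omega^{i+2}M\lra(\Omega^{i+1}M)^{\oplus e}\oplus K \lra \Omega^iM\lra 0,$$
and there is no evident natural map $(\Omega^{i+1}M)^{\oplus e}\to\Omega^iM$ coming from the resolution; the numerical recurrence $b_{i+2}=eb_{i+1}-b_i$ does not arrive packaged with a module-level realization of this shape. Even if such a sequence existed, your final step is unjustified: $\nu$ and $\dim_{\fk}\Soc$ are not additive on exact sequences, so extracting a three-term recurrence from the long exact $\Ext$-sequence still requires proving that certain connecting maps vanish for $i\gg0$. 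That vanishing \emph{is} the content of the theorem, so the argument is circular at this stage. Your explanation of the role of $e>2$ is also off: Sj\"odin's formula and the denominator $1-et+t^2$ already hold for $e\ge2$, and distinctness of the roots of $x^2-ex+1$ plays no role.

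The paper follows a different route. Rather than seeking a relation among three consecutive syzygies, it studies the single map $\Tor_i^R(\iota_M,N)$ induced by the inclusion $\iota_M\colon\fm M\hookrightarrow M$ and proves that it vanishes for $i\gg0$ whenever $M$ is Koszul (Proposition~\ref{ExtMNprop}). The proof is an induction on $\nu(\fm M)$: the cyclic case combines a Betti-growth estimate for $R/I$ (this is where $e>2$ is genuinely used, cf.\ Lemma~\ref{bettiRmodI}) with dimension-shifting along the complete resolution; the inductive step passes to an algebraic closure of $\fk$ and uses an eigenvalue argument (Lemma~\ref{m^2}) to peel off a cyclic Koszul submodule. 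Once $\Tor_i^R(\iota_M,N)=0$, Remark~\ref{length-comp} gives $\fm\Tor_i^R(M,N)=0$ and the closed formula $\nu(\Tor_i^R(M,N))=\nu(M)\beta_i^R(N)-\nu(\fm M)\beta_{i-1}^R(N)$ for $i\gg0$, from which rationality with denominator $1-et+t^2$ follows directly from Sj\"odin's result for $\Po^R_N(t)$. The non-Koszul case reduces to $M=\fk$ via a syzygy. The example $R=\fk[x,y]/(x^2,y^2)$, $M=N=R/(x)$ shows the theorem fails at $e=2$, so the hypothesis is sharp.
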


When  $l(M\otimes_RN)<\infty$, with $l(-)$ denoting length, one can define modified versions of the series $\Eo_R^{M,N}(t)$ and $\To^R_{M,N}(t)$ as follows: 
$$
\mathcal E^{M,N}_R(t)=\sum_{i=0}^\infty l\left(\Ext^i_R(M,N)\right)t^i \quad\,\text{and}\quad\, 
\mathcal T_{M,N}^R(t)=\sum_{i=0}^\infty l\left(\Tor_i^R(M,N)\right)t^i \,.
$$

Under the assumptions of the Theorem, our proof reveals that $\fm \Ext^i_R(M,N)=0$ and $\fm \Tor_i^R(M,N)=0$
for $i\gg 0$, hence we also have, cf.\! Corollary \ref{end}: 
$$(1-et+t^2)\cdot \mathcal E^{M,N}_R(t)\in \mathbb Z[t]\quad\text{and}\quad (1-et+t^2)\cdot \mathcal T^R_{M,N}(t)\in \mathbb Z[t]\,.$$
When $R$ is a complete intersection, rationality of  $\mathcal E^{M,N}_R(t)$ and $\mathcal T_{M,N}^R(t)$  is known, due to Gulliksen \cite{Gu}. On the other hand, Roos \cite{Ro} gives an example of a (non-Gorenstein) ring $R$ with $\fm^3=0$ and a module $M$ such that $\mathcal E^{M,M}_R(t)$ is rational, while 
$\mathcal T_{M,M}^R(t)$ is transcendental. We refer to \cite{Ro} for the connections of such results with homology of free loop spaces and cyclic homology. 

The rings  considered in this paper (i.e. Gorenstein rings with radical cube zero) are homomorphic images of a hypersurface, via a Golod homomorphism (see \cite[1.4]{AIS}).  As indicated by Roos, it is reasonable to expect that the series $\mathcal E^{M,N}_R(t)$ and $\mathcal T_{M,N}^R(t)$ are rational  for all $M$, $N$ with  $l(M\otimes_RN)<\infty$ whenever $R$ is a homomorphic image of a complete intersection via a Golod homomorphism. Along the same lines, we may also expect that the series $\Eo^{M,N}_R(t)$ and $\To^R_{M,N}(t)$ are rational for such $R$, and any finite $R$-modules $M$, $N$.

An important aspect of our arguments is the use of the notion of {Koszul module}.   The structure of Koszul modules in the case of Gorenstein rings $R$  with $\fm^3=0$  is well understood, and is used heavily in the proofs. The main ingredient in the proof consists of showing that, under the hypotheses of the theorem, the homomorphism  $\Tor_i^R(\fm M,N)\to \Tor_i^R(M, N)$ induced by the inclusion $\fm M\hookrightarrow M$ is zero for $i\gg 0$ whenever the module $M$ is Koszul.  This is the statement of Proposition \ref{ExtMNprop}, proved in Section 2.  The proof of the main theorem  is given  in Section 3.

\section{Preliminaries}

In this section we introduce notation and discuss some background. We introduce the notion of Koszul module, and we give equivalent characterizations in the case of interest. Lemma \ref{m^2} and Lemma \ref{3parts} will become instrumental in Section 2 in setting up an induction argument towards the proof of the main result, while Lemma \ref{RmodIhomoN} provides one of the key ideas in constructing the proof. 

Throughout, $(R,\fm,\fk)$ denotes a local commutative ring with maximal ideal $\fm$ and $\fk=R/\fm$, and $M$, $N$ are finite $R$-modules.  We set 
$$\ov M=M/\fm M\quad\text{and}\quad \nu(M)=\rank_{\fk}(\ov M)\,.$$

\begin{lemma}
\label{m^2}
Assume $\fk$ is algebraically closed and $\nu(\fm)\ge 2$. Let $M$ be a finite $R$-module with $\fm^2M=0$ and such that $\nu(M)\ge \nu(\fm M)$. There exists then $x\in M\smallsetminus\fm M$ such that $\ann(x)\ne \fm^2$. 
\end{lemma}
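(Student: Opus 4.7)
The plan is to reformulate the statement as a question about a $\fk$-bilinear map and use the algebraic closure of $\fk$ together with the hypothesis $\nu(\fm)\ge 2$ to produce an eigenvalue.

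First I observe that since $\fm^2M=0$, we have $\fm^2\subseteq \ann(x)$ for every $x\in M$, so $\ann(x)\neq\fm^2$ is equivalent to $\ann(x)\supsetneq\fm^2$, i.e., to the existence of $r\in\fm\smallsetminus\fm^2$ with $rx=0$. Since $\fm^2M=0$, the multiplication $\fm\times M\to \fm M$ descends to a $\fk$-bilinear map
\[
\mu\col V\otimes_\fk W\lra U,\qquad V:=\fm/\fm^2,\quad W:=M/\fm M,\quad U:=\fm M,
\]
(the $\fk$-structure on $U$ comes from $\fm\cdot \fm M=0$), and $\dim_\fk W=\nu(M)$, $\dim_\fk U=\nu(\fm M)$ since $\fm\cdot U=0$. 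For each $\bar x\in W$ write $\phi_{\bar x}\col V\to U$ for $\bar r\mapsto\mu(\bar r\otimes \bar x)$; then what I want to produce is a nonzero $\bar x\in W$ for which $\phi_{\bar x}$ has nontrivial kernel.

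Next I argue by contradiction, supposing $\phi_{\bar x}$ is injective for every $\bar x\in W\smallsetminus\{0\}$. This is equivalent to saying that $\mu(\bar r\otimes\bar x)=0$ forces $\bar r=0$ or $\bar x=0$; in particular, for each $\bar r\in V\smallsetminus\{0\}$ the transposed map $\psi_{\bar r}\col W\to U$, $\bar x\mapsto\mu(\bar r\otimes\bar x)$, is also injective. Injectivity of $\psi_{\bar r}$ yields $\dim_\fk W\le\dim_\fk U$, while the hypothesis $\nu(M)\ge\nu(\fm M)$ gives the reverse inequality; hence $\dim_\fk W=\dim_\fk U$ and each $\psi_{\bar r}$ with $\bar r\neq 0$ is in fact an isomorphism.

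Finally I exploit $e=\nu(\fm)=\dim_\fk V\ge 2$ to pick two linearly independent vectors $\bar r_1,\bar r_2\in V$ and form the $\fk$-linear endomorphism
\[
\psi_{\bar r_2}^{-1}\circ\psi_{\bar r_1}\col W\lra W.
\]
Since $\fk$ is algebraically closed and $W\ne 0$ (else the assertion is vacuous), this operator admits an eigenvalue $\lambda\in\fk$ with eigenvector $\bar x\ne 0$. Then $\psi_{\bar r_1}(\bar x)=\lambda\psi_{\bar r_2}(\bar x)=\psi_{\lambda\bar r_2}(\bar x)$, i.e.\ $\psi_{\bar r_1-\lambda\bar r_2}(\bar x)=0$; but $\bar r_1-\lambda\bar r_2\ne 0$ by linear independence, contradicting the injectivity of $\psi_{\bar r_1-\lambda\bar r_2}$ established in the previous step. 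So some $\bar x\ne 0$ must have $\phi_{\bar x}$ non-injective, and lifting to any $x\in M\smallsetminus\fm M$ representing $\bar x$ produces the desired element.

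The main obstacle is recognizing that one should trade the ring-theoretic condition $\ann(x)\neq\fm^2$ for the bilinear-map condition ``$\phi_{\bar x}$ has a kernel'', and then extracting a scalar from the two commuting hypotheses (algebraic closure of $\fk$ and $e\ge 2$) via an eigenvalue argument; the comparison $\nu(M)\ge\nu(\fm M)$ is what upgrades the injective $\psi_{\bar r}$ to an invertible one, so that the composition $\psi_{\bar r_2}^{-1}\psi_{\bar r_1}$ is defined.
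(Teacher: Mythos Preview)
Your proof is correct and follows essentially the same route as the paper's: both argue by contradiction, deduce that multiplication by each nonzero class in $\fm/\fm^2$ gives a $\fk$-isomorphism $M/\fm M\to\fm M$, and then extract an eigenvalue (of what you call $\psi_{\bar r_2}^{-1}\psi_{\bar r_1}$, and the paper realizes as a change-of-basis matrix $P$) to produce a nonzero element of $\fm/\fm^2$ killing some $\bar x\ne 0$. Your bilinear-map packaging is a bit cleaner than the paper's explicit coordinate computation, but the mathematical content is the same.
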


\begin{proof}
Assume that $\ann(x)=\fm^2$ for every $x\in M\smallsetminus \fm M$. If $a\in R$, we denote by $\ov a$ its image in $\fk=R/\fm$. Set $\nu(\fm M)=n$.  Since $\fm^2M=0$, note that $\fm M$ has a vector space structure over $\fk$ and $\rank_{\fk}(\fm M)=n$. The structure is given by  $\ov a x=ax$ for $x\in \fm M$ and $a\in R$.
 
By hypothesis, we have  $\nu(M)\ge n$. Let $x_1, \dots, x_n$ be part of a minimal generating set of $M$.

\noindent{\it Claim:}  If $\alpha\in \fm\smallsetminus \fm^2$, then $\alpha x_1, \dots, \alpha x_n$ is a  basis of $\fm M$ over $\fk$. 

To prove this claim, assume 
$$
\ov{b_1}(\alpha x_1)+\dots +\ov{b_n}(\alpha x_n)=b_1(\alpha x_1)+\dots +b_n(\alpha x_n)=0 
$$
for some $b_i\in R$.  Set $x=b_1x_1+\dots +b_nx_n$. We have thus $\alpha x=0$, hence $\alpha\in \ann(x)$. If  $x\notin\fm M$, then $\ann(x)=\fm^2$ by assumption, and thus  $\alpha\in \fm^2$, a contradiction. Consequenly $x\in \fm M$, and hence $b_i\in \fm$ and thus $\ov{b_i}=\ov{0}$ for all  $i$. This shows that  $\alpha x_1, \dots ,\alpha x_n$ is linearly independent over $\fk$. Since $\rank_{\fk}(\fm M)=n$, this set is a basis of $\fm M$ and the claim is proved.

Assume now that $\alpha, \beta$ is part of a minimal set of generators of $\fm$. By the above, the sets $\alpha x_1, \dots ,\alpha x_n$ and $\beta x_1, \dots ,\beta x_n$ are both bases of $\fm M$ over $\fk$. We have then relations 
\begin{equation}
\label{beta}
\beta x_j=\sum_{i=1}^n  p_{ij}\alpha x_i\qquad \text{for all $j$ with $1\le j\le n$, }
\end{equation}
where $p_{ij}\in R$ and the change of basis matrix $P=(\ov{ p_{ij}})$ is invertible.    Recall that $\fk$ is algebraically closed, and let $\lambda\in \fk$ be an eigenvalue of $P$. Since $P$ is invertible we have $\lambda\ne 0$. Choose then $\gamma\in R$ so that $\ov \gamma=-\lambda^{-1}$. Since $\lambda=-(\ov\gamma)^{-1}$ is an eigenvalue, we have $\det(P+(\ov\gamma)^{-1}I)=\ov 0$, where $I$ is the $n\times n$ identity matrix,  and it follows that $\det(I+\ov \gamma P)=\ov 0$ and hence the  matrix equation 
$$(I+\ov \gamma P){\bd b} =\ov 0$$ has a nontrivial solution ${\bd b}\in \fk^n$, where ${\bd b}=(\ov{b_1}, \dots, \ov{b_n})^T$ with $b_i\in R$. With this choice of $\gamma$ and $b_i$, we have thus
\begin{equation}
\label{b}
 b_i+\gamma \sum_{j=1}^n{p_{ij}} b_j\in \fm \qquad \text{for all $i$ with $1\le i\le n$\,.}
\end{equation}
The equations \eqref{b} and \eqref{beta} yield: 
\begin{align*}
(\alpha+\beta\gamma) (b_1x_1+\dots+ b_nx_n)&=\sum_{i=1}^n b_i(\alpha x_i)+\gamma \sum_{j=1}^n b_j (\beta x_j)=\\
&=\sum_{i=1}^n(b_i+\gamma \sum_{j=1}^np_{ij}b_j)(\alpha x_i)\in \fm^2M=0\,.
\end{align*}
Set $x=b_1x_1+\dots +b_nx_n$ and note that $x\notin \fm M$, since the vector $\bd b\in \fk^n$ is nontrivial, and thus $b_i\notin \fm$ for at least some $i$. We have thus  $\alpha+\beta\gamma\in \ann(x)$ and,  since $\ann(x)=\fm^2$, it follows that $\alpha+\beta\gamma\in \fm^2$. This is a contradiction, since $\alpha, \beta$ is part of a minimal set of generators for $\fm$. 
\end{proof}

 Let $\varphi\col M\to N$ be a homomorphism. We denote by $\ov \varphi$ the induced map $\ov\varphi\col \ov M\to \ov N$. 
If $A$ is a finite  $R$-module, then for each $i$ we let 
\begin{align*}
&\Tor_i^R(\varphi, A)\colon \Tor_i^R(M, A)\to \Tor_i^R(N, A)\\
&\Ext^i_R(\varphi,M)\colon \Ext^i_R(N,A)\to \Ext^i_R(M,A)
\end{align*}
denote the induced maps. We let 
$$
\iota_M\col \fm M\to M\qquad\text{and}\qquad \pi_M\col M\to  \ov M
$$ 
denote the  inclusion, respectively the canonical projection.

For each $i$ we set $\beta_i^R(M)=\rank_{\fk}\Tor_i^R(M,\fk)$ ; this number is the $i$th {\it Betti number} of $M$ over $R$.

The main ingredient  in proving rationality of the series defined in the introduction consists in  showing that the maps $\Tor_i^R(\iota_M,N)$  become zero for large values of $i$, under certain assumptions on the ring and on the modules. The next lemma is a first step in this direction, and it will be further extended in Section 2. 
 
\begin{lemma}\label{RmodIhomoN}
Let $M$, $N$ be finite $R$-modules with $\nu(M)=1$.  Assume there exists an  integer $i\ge 0$ such that $\Tor_i^R(\iota_{M},\fk)=0$ and $\beta_i^R(M)>\beta_i^R(N)$. 
\begin{enumerate}[\quad\rm(1)]
\item If $\varphi\in \Hom_R(M,N)$, then  $\varphi(M)\subseteq \fm N$.
\item If $\fm^2N=0$ then $\Hom_R(\iota_{M}, N)=0$.
\end{enumerate}
\end{lemma}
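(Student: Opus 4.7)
The plan is to deduce (1) by a contrapositive/naturality argument and then obtain (2) as a quick consequence.

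For (1), suppose toward contradiction that $\varphi(M)\not\subseteq \fm N$, and I would aim to show that $\Tor_i^R(\varphi,\fk)$ is injective, contradicting $\beta_i^R(M)>\beta_i^R(N)$. Since $\nu(M)=1$, the projection $\pi_M\colon M\to \ov M$ identifies $\ov M$ with $\fk$, so $\ov \varphi\colon \ov M=\fk\to \ov N$ is a nonzero $\fk$-linear map out of a one-dimensional space, hence injective. As $\ov N$ is a $\fk$-vector space, $\ov\varphi$ admits a $\fk$-linear, hence $R$-linear, retraction $r\colon\ov N\to\fk$; consequently $\Tor_i^R(\ov\varphi,\fk)$ is split injective (with left inverse $\Tor_i^R(r,\fk)$).

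Next I would invoke the long exact $\Tor$-sequence of $0\to \fm M\xrightarrow{\iota_M} M\xrightarrow{\pi_M}\ov M\to 0$: the hypothesis $\Tor_i^R(\iota_M,\fk)=0$ forces $\Tor_i^R(\pi_M,\fk)$ to be injective. By functoriality applied to the commutative square relating $\varphi$ and $\ov\varphi$ via the projections, there is an identity
\[
\Tor_i^R(\pi_N,\fk)\circ\Tor_i^R(\varphi,\fk)=\Tor_i^R(\ov\varphi,\fk)\circ\Tor_i^R(\pi_M,\fk).
\]
The right-hand side is a composition of two injective maps, so the left-hand side is injective; in particular $\Tor_i^R(\varphi,\fk)$ is injective, which yields $\beta_i^R(M)\le \beta_i^R(N)$, contradicting the hypothesis. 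This proves (1).

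For (2), given any $\varphi\in\Hom_R(M,N)$, part (1) gives $\varphi(M)\subseteq \fm N$. Hence $\varphi(\fm M)=\fm\varphi(M)\subseteq \fm\cdot\fm N=\fm^2N=0$, which says exactly that $\varphi\circ\iota_M=0$, i.e.\ $\Hom_R(\iota_M,N)(\varphi)=0$. As $\varphi$ was arbitrary, $\Hom_R(\iota_M,N)=0$.

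There is no serious obstacle; the only point that requires care is the interaction between the long exact sequence for $0\to\fm M\to M\to\ov M\to 0$ and the functoriality square for $\varphi$. The essential use of $\nu(M)=1$ is to force $\ov\varphi$ to split, which is what promotes $\Tor_i^R(\ov\varphi,\fk)$ to an injection and makes the Betti number comparison available.
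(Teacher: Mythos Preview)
Your proof is correct and follows essentially the same approach as the paper's: both argue by contradiction that $\ov\varphi$ is a split injection of $\fk$-vector spaces, use the hypothesis $\Tor_i^R(\iota_M,\fk)=0$ to make $\Tor_i^R(\pi_M,\fk)$ injective, and then read off the injectivity of $\Tor_i^R(\varphi,\fk)$ from the commutative square, contradicting the Betti number inequality; part (2) is deduced identically from (1).
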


\begin{proof}
Assume $\varphi(M)\not\subseteq \fm N$. Since $M$ is cyclic, the induced map ${\ov\varphi} \colon \ov{M} \to \ov N$ is injective. Since it is a homomorphism of vector spaces, it has a splitting, hence the induced maps $\Tor_i^R({\ov \varphi},\fk)\col \Tor_i^R(\ov M,\fk)\to \Tor_i^R(\ov  N,\fk)$ are injective for $i \ge 0$.  The short exact sequence 
$$
0\to \fm M\to M\to \ov M\to 0
$$
induces the top exact row in the commutative diagram below: 
\begin{equation}
\begin{tikzcd}
\Tor_i^R(\fm M,\fk)\arrow{rr}{{\Tor_i^R(\iota_M,\fk)}}& & \Tor_i^R(M,\fk)\arrow{rr}{{\Tor_i^R(\pi_M,\fk)}}\arrow{d}[swap] {\Tor_i^R(\varphi,\fk)}& 
&\Tor_i^R(\ov M,\fk)\arrow{d}{{\Tor_i^R(\ov \varphi, \fk)}}\\
&& \Tor_i^R(N,\fk)\arrow{rr}{\Tor_i^R(\pi_N,\fk)}& & \Tor_i^R(\ov N,\fk)
\end{tikzcd}
\end{equation}
If $\Tor_i^R(\iota_M,\fk)=0$, then $\Tor_i^R(\pi_M,\fk)$ is injective. 
The commutative square gives then that $\Tor_i^R(\varphi,\fk)$ is injective. We conclude $\beta_i^R(M)\le \beta_i^R(N)$, a contradiction. 

We have thus $\varphi(M)\subseteq \fm N$, hence (1) is established.  To prove (2), note that the image of $\varphi$ under the map  $\Hom_R(\iota_{M}, N)$ is the composition $\varphi\iota_{M}\col \fm M\to N$. We have: 
$$
\varphi\iota_M(\fm M)\subseteq \varphi(\fm M)\subseteq \fm \varphi(M)\subseteq \fm^2N\,.
$$
When $\fm^2N=0$ we conclude  $\varphi\iota_M=0$, and thus $\Hom_R(\iota_M,N)=0$.  
\end{proof}

\begin{bfchunk}{Hilbert and Poincar\'e series. }
The \emph{Hilbert series} of $M$ (over $R$) is the formal power series
$$
H_M(t)=\sum_{i=0}^\infty \rank_\fk\left(\frac{\fm^i M}{\fm^{i+1}M}\right)t^i\,.
$$ 
The {\it Poincar\'e series} of $M$ is the formal power series
$$
P_M^R(t)=\sum_{i=0}^\infty \beta_i^R(M)t^i\,.
$$
\end{bfchunk}

The next remark clarifies  the attention we will  give in Section 2 to the vanishing of the maps $\Tor_i^R(\iota_M,N)$: such vanishing allows for computations of the series of interest. 

\begin{remark}
\label{length-comp}
Assume that  $\fm^2M=0$. The short exact sequence
$$
0\to \fm M\xrightarrow{\iota_M} M\xrightarrow{\pi_M} \ov M\to 0
$$
induces for each $i>0$ the following exact sequence: 
$$
0\to L_i\to \Tor_i^R(M,N)\xrightarrow{\Tor_i^R(\pi_M,N)}\Tor_i^R(\ov M,N)\xrightarrow{\Delta_i}\Tor_{i-1}^R(\fm M,N)\to L_{i-1}\to 0
$$
where $L_i$ is the image of the map $\Tor_i^R(\iota_M,N)$. A length count gives: 
$$
l(\Tor_i^R(M,N))=l(\Tor_i^R(\ov M,N))-l(\Tor_{i-1}^R(\fm M,N)) +l(L_i)+l(L_{i-1})
$$
Since both $\ov M$ and $\fm M$ are $\fk$-vector spaces, we have 
\begin{align*}
l(\Tor_i^R(\ov M,N))&=\rank_{\fk}(\Tor_i^R(\fk^{\nu(M)}, N))=\nu(M)\beta_i^R(N);\\
l(\Tor_{i-1}^R(\fm M,N))&=\rank_{\fk}(\Tor_{i-1}^R(\fk^{\nu(\fm M)}, N))=\nu(\fm M)\beta_{i-1}^R(N)\,.
\end{align*}
We have thus
\begin{equation}
\label{length}
l(\Tor_i^R(M,N))\ge \nu(M)\beta_i^R(N)-\nu(\fm M)\beta_{i-1}^R(N)\,.
\end{equation}
Equality holds in \eqref{length} if and only if $L_i=0=L_{i-1}$, and hence  if and only if $\Tor_i^R(\iota_M,N)=0=\Tor_{i-1}^R(\iota_M,N)$.
In particular,  we obtain from here that the following two statements are equivalent when $\fm^2M=0$: 
\begin{enumerate}
\item $\Tor_i^R(\iota_M, N)=0$ for all $i\ge 0$; 
\item $\sum_{i=0}^\infty l(\Tor_i^R(M,N))t^i=H_M(-t)\Po^R_{N}(t)$. 
\end{enumerate}
Also, note that $L_i=0$ implies that $\Tor_i^R(M,N)$ is isomorphic to a submodule of $\Tor_i^R(\ov M,N)$, and hence $\fm \Tor_i^R(M,N)=0$. In particular, condition (1) also implies: 
\begin{enumerate}
\item[(3)]  $\sum_{i=0}^\infty \nu(\Tor_i^R(M,N))t^i=H_M(-t)\Po^R_{N}(t)$. 
\end{enumerate}
\end{remark}

\begin{bfchunk}{Koszul rings and modules.}
\label{defKos}
As defined in \cite{HI}, an $R$-module $M$ is said to be {\it Koszul} if its linearity defect is $0$; we refer to {\it loc.\! cit.\!} for the definition of linearity defect, and we note that $M$ is Koszul if and only if the associated graded module $\gr_{\fm}(M)$ has a linear resolution over $\gr_{\fm}(R)$. As noted in \cite[1.8]{HI}, if $M$ is Koszul, then 
\begin{equation}
\label{KoszulP}
\Po_M^R(t)=\frac{H_M(-t)}{H_R(-t)}
\end{equation}
The ring $R$ is said to be {\it Koszul} if $\fk$ is a Koszul module. 

If $R$ is Koszul and $\fm^2M=0$, then the following are equivalent: 
\begin{enumerate}
\item $M$ is Koszul; 
\item $\Tor_i^R(\iota_M,\fk)=0$ for all $i\ge 0$; 
\item The formula \eqref{KoszulP} holds. 
\end{enumerate}
See \cite[3.1]{AIS} for the equivalence $(1)\iff(2)$ and  Remark \ref{length-comp} for $(2)\iff(3)$. 
\end{bfchunk}

\begin{lemma}
\label{3parts}
Assume there exists a short exact sequence
$$
0\to A\xrightarrow{\varphi} M\xrightarrow{\psi}B\to 0
$$
of finite $R$-modules such that $\ov\varphi\colon \ov A\to \ov M$ is injective. Let $N$ be a finite $R$-module. 
\begin{enumerate}[\quad\rm(1)]
 \item If $\Tor_i^R(\iota_A,N)=0$ for some $i$, then $\Tor_i^R(\varphi, N)$ is injective and $\Tor_{i+1}^R(\psi, N)$ is surjective. 
\item If $\Tor_i^R(\iota_B,N)=\Tor_{i-1}^R(\iota_A,N)=\Tor_i^R(\iota_A,N)=0$ for some $i$, then we also have $\Tor_i^R(\iota_M,N)=0$. 
\item If $R$ is a  Koszul ring, $\fm^2M=0$ and $M$ is Koszul,  then $B$ is Koszul.
\end{enumerate}
\end{lemma}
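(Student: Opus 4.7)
The three parts are consequences of running the long exact Tor sequences attached to the given short exact sequence and to the ``mod-$\fm$'' sequences $0 \to \fm X \to X \to \ov X \to 0$ for $X \in \{A,M,B\}$. The key observation is that, since $\ov\varphi\colon \ov A \to \ov M$ is an injection of $\fk$-vector spaces, it is split; hence $\Tor_i^R(\ov\varphi, N)$ is split injective for every $i$, and $0\to\ov A\to\ov M\to\ov B\to 0$ is a split short exact sequence. Moreover, $\ov\varphi$ injective is equivalent to $\varphi(A)\cap\fm M=\varphi(\fm A)$, which yields a short exact sequence $0\to\fm A\to\fm M\to\fm B\to 0$; when $\fm^2M=0$ this is itself a short exact sequence of $\fk$-vector spaces.

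For part (1), the hypothesis $\Tor_i^R(\iota_A, N) = 0$ together with the long exact Tor sequence of $0\to\fm A\to A\to\ov A\to 0$ forces $\Tor_i^R(\pi_A, N)$ to be injective. The commutative square coming from $\pi_M\varphi=\ov\varphi\pi_A$, combined with split injectivity of $\Tor_i^R(\ov\varphi, N)$, then makes $\Tor_i^R(\varphi, N)$ injective. Equivalently, the connecting map $\Tor_{i+1}^R(B, N)\to\Tor_i^R(A, N)$ of the long exact sequence of $0\to A\to M\to B\to 0$ is zero, which is the same as saying $\Tor_{i+1}^R(\psi, N)$ is surjective.

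For part (2), I would apply $\Tor_i^R(-, N)$ to the commutative diagram
$$
\begin{tikzcd}
A \arrow[r, "\varphi"] \arrow[d, "\pi_A"'] & M \arrow[r, "\psi"] \arrow[d, "\pi_M"'] & B \arrow[d, "\pi_B"] \\
\ov A \arrow[r, "\ov\varphi"'] & \ov M \arrow[r, "\ov\psi"'] & \ov B
\end{tikzcd}
$$
obtaining a diagram whose top row is exact at $\Tor_i^R(M,N)$ and whose bottom row is a split short exact sequence (since the original bottom row is such a sequence of $\fk$-vector spaces). The hypotheses $\Tor_i^R(\iota_A, N) = \Tor_i^R(\iota_B, N) = 0$ make the outer vertical maps $\Tor_i^R(\pi_A, N)$ and $\Tor_i^R(\pi_B, N)$ injective, by the long exact sequence argument of part (1) applied to $A$ and to $B$. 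A short chase then shows $\Tor_i^R(\pi_M, N)$ is also injective: a kernel element $z$ maps to $0$ in $\Tor_i^R(\ov B, N)$, hence $\Tor_i^R(\psi,N)(z)=0$ by injectivity of $\Tor_i^R(\pi_B,N)$; exactness of the top row pulls $z$ back to some $y\in\Tor_i^R(A,N)$ whose image in $\Tor_i^R(\ov A,N)$ must vanish, forcing $y=0$ and $z=0$. The long exact Tor sequence of $0\to\fm M\to M\to\ov M\to 0$ then gives $\Tor_i^R(\iota_M, N)=0$. I expect this chase to be the main obstacle; interestingly, it does not appear to require the hypothesis $\Tor_{i-1}^R(\iota_A, N)=0$, so the author's proof may follow a slightly different path.

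For part (3), applying $\Tor_i^R(-,\fk)$ to the short exact sequence $0\to\fm A\to\fm M\to\fm B\to 0$ of $\fk$-vector spaces (using $\fm^2M=0$) keeps it exact, producing in particular a surjection $\Tor_i^R(\fm M,\fk)\twoheadrightarrow\Tor_i^R(\fm B,\fk)$. Since $R$ and $M$ are Koszul, \ref{defKos} gives $\Tor_i^R(\iota_M,\fk)=0$ for all $i$. Writing $\psi'\colon\fm M\to\fm B$ for the restriction of $\psi$, the identity $\psi\iota_M=\iota_B\psi'$ yields a commutative square whose left vertical is $\Tor_i^R(\iota_M,\fk)=0$ and whose top horizontal $\Tor_i^R(\psi',\fk)$ is the surjection above; it follows that $\Tor_i^R(\iota_B,\fk)=0$ for every $i$. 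Applying \ref{defKos} to $B$ (which satisfies $\fm^2B=0$) concludes that $B$ is Koszul.
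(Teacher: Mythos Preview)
Your argument is correct and follows essentially the same route as the paper: both proofs build the $3\times 3$ diagram with rows $0\to\fm X\to X\to\ov X\to 0$ and columns the given short exact sequence, observe that the bottom row is a split exact sequence of $\fk$-vector spaces (and, when $\fm^2M=0$, so is the top row), and read off the conclusions from the induced diagram in $\Tor$.

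The one genuine difference is in part (2). The paper uses the hypothesis $\Tor_{i-1}^R(\iota_A,N)=0$ together with part (1) (applied at index $i-1$) to force $\Tor_i^R(\psi,N)$ to be surjective, so that the middle and bottom rows of the induced diagram become honest short exact sequences and the snake lemma applies directly. Your direct chase instead uses only exactness of the middle row at $\Tor_i^R(M,N)$ and injectivity of $\Tor_i^R(\ov\varphi,N)$, which is enough: it is the ``four-lemma'' half of the five lemma, and it does not require surjectivity of $\Tor_i^R(\psi,N)$. Your suspicion is therefore correct---the hypothesis $\Tor_{i-1}^R(\iota_A,N)=0$ is not actually needed for the conclusion of (2), and your argument gives a marginally sharper statement. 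The paper's formulation is still adequate for its applications (in Proposition~\ref{ExtMNprop} the vanishing hypotheses hold for all large $i$ anyway), but your chase is cleaner here.
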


\begin{proof}
The hypothesis that  $\ov\varphi$ is injective yields a commutative diagram with exact rows and columns: 
\begin{equation}
\label{diag1}
\begin{tikzcd}
&0\arrow{d}{}&0\arrow{d}{}&0\arrow{d}{}&\\
0\arrow{r} &\fm A\arrow{d}{\iota_A}\arrow{r}{} & \fm M\arrow{r}{\psi'}\arrow{d}{\iota_M} & \fm B\arrow{r}{}\arrow{d}{\iota_B}& 0\\
0\arrow{r}& A\arrow{d}{\pi_A}\arrow{r}{\varphi} & M\arrow{r}{\psi}\arrow{d}{\pi_M} & B\arrow{r}\arrow{d}{\pi_B}& 0\\
0\arrow{r}& \ov A\arrow{r}{\ov{\varphi}}\arrow{d}{} & \ov M\arrow{r}{\ov{\psi}}\arrow{d}{}&\ov  B\arrow{r}\arrow{d}{}& 0\\
&0&0&0&
\end{tikzcd}
\end{equation}
Note that the bottom row in this diagram is an exact sequence of vector spaces, hence it is  split, and it remains exact when applying $\Tor_i^R(-,N)$.
In particular, $\Tor_i^R(\ov\varphi, N)$ is injective and $\Tor_i^R(\ov \psi, N)$ is surjective. 
 Diagram \eqref{diag1} induces then the following commutative diagram with exact rows and columns: 
\begin{equation}
\label{diag2}
\begin{tikzcd}[column sep=large]
& \Tor_i^R(\fm A,N)\arrow{d}{\Tor_i^R(\iota_A,N)}\arrow{r}{} & \Tor_i^R(\fm M,N)\arrow{d}{\Tor_i^R(\iota_M,N)}\arrow{r}{\Tor_i^R(\psi',N)} &\Tor_i^R(\fm B,N)\arrow{d}{{\Tor_i^R(\iota_B,N)}}& \\
& \Tor_i^R(A,N) \arrow{d}{\Tor_i^R(\pi_A,N)}\arrow{r}{{\Tor_i^R(\varphi,N)}}& \Tor_i^R(M,N)\arrow{d}{\Tor_i^R(\pi_M,N)}\arrow{r}{\Tor_i^R(\psi,N)}\ &  \Tor_i^R(B,N)\, \arrow{d}{{\Tor_i^R(\pi_B,N)}}&\\
&0\to  \Tor_i^R(\ov A,N)\arrow{r}{\Tor_i^R(\ov{\varphi},N)} & \Tor_i^R(\ov M,N)\arrow{r}{\, \Tor_i^R(\ov{\psi},N)} &\Tor_i^R(\ov B,N)\to 0&
\end{tikzcd}
\end{equation}
We have then:

(1) If  $\Tor_i^R(\iota_A,N)=0$, it follows that $\Tor_i^R(\pi_A,N)$ is injective. Since   $\Tor_i^R(\ov\varphi, N)$ is injective, the bottom left commutative square gives that $\Tor_i^R(\varphi, N)$ is injective as well. The fact that $\Tor_{i+1}^R(\psi, N)$ is surjective follows from the long exact sequence associated in homology with the exact sequence from the statement.

(2) In view of part (1),  the hypothesis that $\Tor_{i-1}^R(\iota_A,N)=\Tor_i^R(\iota_A,N)=0$ shows that $\Tor_i^R(\varphi, N)$ is injective and $\Tor_{i}^R(\psi, N)$ is surjective. The hypothesis also implies that $\Tor_i^R(\pi_A,N)$ and $\Tor_i^R(\pi_B,N)$ are injective. A snake lemma argument using the bottom two rows of \eqref{diag2} gives that $\Tor_i^R(\pi_M,N)$ is injective as well, and thus $\Tor_i^R(\iota_M,N)=0$. 

(3) The additional hypothesis that $\fm^2M=0$  gives that the top row in \eqref{diag1} is an exact sequence of vector spaces. Consequently, it  is split, and in particular  $\Tor_i^R(\psi',\fk)$ is surjective for all $i\ge 0$.  Since $M$ is Koszul, we have $\Tor_i^R(\iota_M,\fk)=0$ for all $i\ge 0$. The upper right commutative square in \eqref{diag2} with $N=\fk$ yields then $\Tor_i^R(\iota_B,\fk)=0$ for all $i\ge 0$, and hence $B$ is Koszul, in view of \ref{defKos}. 
\end{proof}

\section{ Koszul modules over short Gorenstein rings}

In this section we focus our attention on Gorenstein local rings with $\fm^3=0$. We present first some needed background material.  The bulk of the section is taken up with the proof of Proposition \ref{ExtMNprop} and its supporting lemmas.

Throughout this section, $(R,\fm,\fk)$ denotes a Gorenstein local ring with $\fm^3=0$ and $\fm^2\ne 0$. We set 
$
e=\nu(\fm)
$ 
and we assume that $e\ge 2$. 

Let $M$, $N$ be finite $R$-modules. For any $N$ set  $N^*=\Hom_R(N,R)$.  If $\varphi\col M\to N$ is a homomorphism, then $\varphi^*\col N^*\to M^*$ denotes the induced map. 

\begin{bfchunk}{Syzygies.}
Let $M$ be a finitely generated $R$-module and let
\begin{equation}
\label{freeM}
\dots \to F_i\xrightarrow{\dd_i} F_{i-1}\to \dots\to  F_1\xrightarrow{\dd_1} F_0\to 0
\end{equation}
be a minimal free resolution of $M$ over $R$. Note that the  Betti numbers of $M$ can be read off this resolution, namely  $\beta_i^R(M)=\rank_R(F_i)$ for all $i\ge 0$.
We set $M_0=M$ and for each $i> 0$ we set 
$$
M_i=\Ima(\partial_i)\,.
$$
The module $M_i$ is called the $i$th {\it syzygy} of $M$. Since $\fm^3=0$, the minimality of the resolution shows that $\fm^2M_i=0$ for all $i>0$. 
 Now let 
\begin{equation}
\dots \to G_i\xrightarrow{d_i} G_{i-1}\to \dots \to G_1\xrightarrow{d_1} G_0\to 0
\end{equation}
be a minimal free resolution of $M^*$. Since $R$ is Gorenstein and Artinian,  the dual of this resolution is also exact and gives a minimal injective resolution of $M^{**}$:
\begin{equation}
\label{injM}
0\to F_{-1}\xrightarrow{\dd_{-1}} F_{-2}\to \dots \to F_{-i}\xrightarrow{\dd_{-i}} F_{-i-1}\to \dots
\end{equation}
with $F_{-i}=G^*_{i-1}$ and $\dd_{-i}=d_{i}^*$. 

Since $M\cong M^{**}$, note that the resolutions in \eqref{freeM} and \eqref{injM} can be ``glued" together through a map $\dd_0$,  yielding a {\it complete resolution} of $M$: 
\begin{equation}
\dots\to F_i\xrightarrow{\dd_i} F_{i-1}\to \dots \to F_1\xrightarrow{\dd_1} F_0\xrightarrow{\dd_0}F_{-1}\xrightarrow{\dd_{-1}} F_{-2}\to \dots \to F_{-i}\xrightarrow{\dd_{-i}} F_{-i-1}\to \dots
\end{equation}
This complex is acylic, that is, its homology is zero in each degree.  If $i>0$ we set 
$$
M_{-i}=\Ima(\dd_{-i})
$$
If $\fm^2M=0$, then $\dd_0(F_0)\subseteq \fm F_{-1}$ and the complete resolution is minimal. Consequently, if $M$ and $N$ are two $R$-modules with $\fm^2M=0=\fm^2N$, the minimal complete resolution shows that 
$$
M_{-i}\cong N \iff M\cong N_i\quad\text {for all $i$.}
$$

In a similar manner, we define negative Betti numbers, when $\fm^2M=0$, by setting $\beta_{-i}^R(M)=\rank_R(F_{-i})$ for all $i>0$. In particular, we have: 
\begin{gather*}
\beta_{-i}^R(M)=\rank_R(F_{-i})=\rank_R(G_{i-1})=\beta_{i-1}^R(M^*) \\
M_{-i}=\Ima(\dd_{-i})=\Ima(d_i^*)\cong (\Ima d_i)^*=(M^*)_i
\end{gather*} 
Furthermore, since $\fk^*\cong \fk$, we have $\beta_{-i}^R(\fk)=\beta_{i-1}^R(\fk)$ and $\fk_{-i}\cong \fk_i$.
\end{bfchunk}

\begin{bfchunk}{Koszul modules over short Gorenstein rings.}\label{KoszulGor}
With $R$ as above, the following statements are equivalent (see \cite[4.6]{AIS}):
\begin{enumerate}
\item $M$ is Koszul; 
\item The syzgygy $M_i$ does not split off a copy of $\fk$ for any $i>0$ (equivalently $M$ is {\it exceptional}, using the terminology of Lescot \cite{Le});
\item $M$ has no direct summand isomorphic to $\fk_{-i}$,  for all $i>0$. 
\end{enumerate}
In particular,  it follows, as noted in \cite[4.6]{AIS}, that  an indecomposable  module $M$ over the short Gorenstein ring $R$ is Koszul if and and only if $M$ is not isomorphic to $\fk_{-i}$ for any $i>0$. Also, note that if $M$ is Koszul, then $M_i$ is Koszul for all $i>0$. 

L\"ofwall \cite{Lof} shows that a Gorenstein ring with $\fm^3=0$ and $e\ge 2$ satisfies 
\begin{equation}
\label{Pok} \Po_\fk^R(t)=\frac{1}{1-et+t^2}.
\end{equation}
This result is recovered and used by Sj\"odin \cite{Sj} to show that for every finitely generated $R$-module one has: 
\begin{equation}
(1-et+t^2)\cdot \Po^R_M(t)\in \mathbb Z[t]\,.
\end{equation}
The results of \cite{Sj} are recovered in \cite{AIS}, where it is also noted that any such $R$ is Koszul.  

Note that formula \eqref{Pok} shows that the Betti numbers $b_i=\beta_i^R(\fk)$ satisfy the relations $b_0=1$, $b_1=e$  and $b_{i+1}=eb_i-b_{i-1}$ for all $i\ge 1$. Since we assumed $e\ge 2$, it follows inductively that the sequence $\{\beta_i^R(\fk)\}_{i\ge 0}$ is strictly increasing. 
\end{bfchunk}

\begin{remark}
\label{Lescot}
It is known that $\fm^2 M=0$ when $M$ is indecomposable and not free; see for instance the proof of \cite[4.6]{AIS}. 

Also, if $\fm^2M=0$ and $M_1$ does not split off a copy of $\fk$, then the following formulas hold, cf.\! \cite[3.3]{Le}: 
\begin{align}
\label{f1}
\nu(M_1)&=\nu(M)e-\nu(\fm M);\\
\label{f2}
\nu(\fm M_1)&=\nu(M)\,.
\end{align}
\end{remark}

\begin{lemma}\label{Ieqm2}
Let $I$ be an ideal of $R$. Then $R/I$ is not Koszul if and only if $I=\fm^2$. 
\end{lemma}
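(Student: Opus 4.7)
The plan is to apply the structural characterization from \ref{KoszulGor}: over our short Gorenstein ring $R$ with $e\ge 2$, an indecomposable $R$-module $M$ is Koszul if and only if $M\not\cong\fk_{-i}$ for any $i>0$. Since $R$ is local, every cyclic quotient $R/I$ with $I\subsetneq R$ is indecomposable, so the problem reduces to deciding for which ideals $I$ one has $R/I\cong\fk_{-i}$ for some $i>0$. I would first dispose of the degenerate cases $I=0$ (giving $R/I=R$, free) and $I=R$ (giving $R/I=0$), both trivially Koszul, and then focus on $0\neq I\subsetneq R$. At this point the Gorenstein hypothesis restricts $I$: since the socle $\fm^2$ is $1$-dimensional and is contained in every nonzero ideal of the Artinian Gorenstein ring $R$, one obtains $\fm^2\subseteq I\subseteq\fm$.

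Next, I would narrow down the candidates $\fk_{-i}$ by counting minimal generators. The minimal complete resolution of $\fk$ introduced earlier in the section yields $\nu(\fk_{-i})=\beta_{-i}^R(\fk)=\beta_{i-1}^R(\fk)$. Because $e\ge 2$, the paper already observes that $\{\beta_j^R(\fk)\}_{j\ge 0}$ is strictly increasing, so $\beta_{i-1}^R(\fk)=1$ only when $i=1$. Since $R/I$ is cyclic, the only surviving possibility for $R/I$ to fail to be Koszul is $R/I\cong\fk_{-1}$.

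To close, I would identify $\fk_{-1}$ explicitly: in the minimal complete resolution of $\fk$ the gluing map $\dd_0\colon F_0=R\to F_{-1}=R$ factors as $R\thra\fk\hookrightarrow R$, where the second arrow embeds $\fk$ into $R$ as the socle $\fm^2$. Hence $\Ima\dd_0=\fm^2$, and by exactness $\Ker\dd_{-1}=\fm^2$, giving $\fk_{-1}\cong R/\fm^2$. Two cyclic modules $R/I$ and $R/J$ are isomorphic if and only if $I=J$ (any generator of $R/I$ is represented by a unit of $R$, whose annihilator is $I$), so $R/I\cong\fk_{-1}$ precisely when $I=\fm^2$, completing the equivalence.

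The one step requiring care is the identity $\nu(\fk_{-i})=\beta_{-i}^R(\fk)$: it relies on the minimality of the complete resolution in negative degrees, namely $\dd_{-i}(F_{-i})\subseteq\fm F_{-(i+1)}$, which ensures that the map $F_{-i}\thra\fk_{-i}$ identifies minimal generating sets on both sides. This is already noted in the excerpt's discussion of complete resolutions, so it is the only piece of bookkeeping beyond the routine.
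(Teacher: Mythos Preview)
Your proposal is correct and follows essentially the same approach as the paper's proof: both reduce to the characterization in \ref{KoszulGor}, use the strict growth of $\{\beta_i^R(\fk)\}$ to force $i=1$, and invoke $\fk_{-1}\cong R/\fm^2$. The only cosmetic difference is that for the implication $I=\fm^2\Rightarrow R/I$ not Koszul, the paper observes directly that $(R/\fm^2)_1\cong\fm^2\cong\fk$ (using characterization~(2) of \ref{KoszulGor}), whereas you run both directions through the identification $R/\fm^2\cong\fk_{-1}$; and you handle the degenerate cases $I=0$ and $I=R$ explicitly, which the paper leaves implicit.
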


\begin{proof}
Since $R$ is Gorenstein with socle $\fm^2$, note that $\fm^2\cong \fk$. 
If $I=\fm^2$ it follows that $\fk\cong (R/I)_1$, hence  $R/I$ is not Koszul. 

Now assume $R/I$ is not Koszul.  Then $R/I \cong \fk_{-i}$ for some $i>0$, hence 
$$
\beta_{i-1}^R(\fk)=\beta_{-i}^R(\fk)=\beta_0^R(R/I)=1\,.
$$
Since the Betti numbers of $\fk$ are strictly increasing, the equality $\beta_{i-1}^R(\fk)=1$ implies $i=1$. We have thus $R/I\cong \fk_{-1}$. Since $\fk_{-1} \cong R/\fm^2$, we conclude $I=\fm^2$. 
\end{proof}

\begin{lemma}\label{bettiRmodI}
If $I$ is a proper ideal of $R$ and $e>2$, then the sequence $\{\beta_i^R(R/I) \} _{i\ge 1}$ is strictly increasing and $\beta_i^R(R/I) \ge i$ for all $i \ge 0$.
\end{lemma}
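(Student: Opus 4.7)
My plan is to dichotomize by Koszulity of $R/I$, using Lemma \ref{Ieqm2}, and in each case to read off the Betti sequence from an explicit Poincar\'e series. As setup: since $R$ is Gorenstein Artinian with one-dimensional socle equal to $\fm^2$ (from $\fm^3=0\ne\fm^2$), every non-zero ideal contains $\fm^2$. Excluding the trivial $I=0$ (for which the conclusion plainly fails), we have $\fm^2\subseteq I\subsetneq R$, so $H_{R/I}(-t)=1-h_1t$ with $h_1=\dim_{\fk}(\fm/I)\in\{0,1,\dots,e\}$, and $h_1=e$ characterizes $I=\fm^2$.

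In the Koszul case ($h_1\le e-1$, by Lemma \ref{Ieqm2}), the formula \eqref{KoszulP} gives
\[
\Po_{R/I}^R(t)=\frac{1-h_1t}{1-et+t^2},
\]
so $\beta_0=1$, $\beta_1=e-h_1\ge 1$, and $\beta_i=e\beta_{i-1}-\beta_{i-2}$ for $i\ge 2$. A direct calculation yields $\beta_2-\beta_1=(e-1)(e+1-h_1)>0$. I then induct: assuming $\beta_{i-1}>\beta_{i-2}\ge 0$,
\[
\beta_i-\beta_{i-1}=(e-1)\beta_{i-1}-\beta_{i-2}\ge 2\beta_{i-1}-\beta_{i-2}\ge \beta_{i-2}+2>0
\]
(using $e>2$), which gives strict monotonicity; telescoping from $\beta_1\ge 1$ then gives $\beta_i\ge i$.

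In the non-Koszul case $I=\fm^2$, the identifications $M_{-i}\cong(M^*)_i$ and $\fk^*\cong\fk$ yield $R/\fm^2\cong\fk_{-1}\cong\fk_1$, hence $\beta_i^R(R/I)=b_{i+1}$ with $b_j=\beta_j^R(\fk)$. Strict monotonicity of $\{b_j\}$ is already recorded in \ref{KoszulGor}, and the same inductive step applied to $b_{j+1}=eb_j-b_{j-1}$ shows $b_j\ge j+1$, so $\beta_i^R(R/I)\ge i+2\ge i$. I do not anticipate any serious obstacle: everything reduces to the elementary observation that for $e\ge 3$ the recurrence $x_i=ex_{i-1}-x_{i-2}$ preserves strict increase of nonnegative sequences, so once the base cases are handled the induction runs without issue.
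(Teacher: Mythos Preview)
Your approach is essentially identical to the paper's: split by Lemma~\ref{Ieqm2} into the cases $I=\fm^2$ and $I\ne\fm^2$, and in each case read the Betti recurrence off the Poincar\'e series. The paper's induction is organized slightly differently (it shows $b_{n+1}-b_n\ge n$ directly, which gives both conclusions at once), but the substance is the same. Two slips in your write-up are worth flagging, though neither damages the conclusion.

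First, in the Koszul case the arithmetic is off: one has $\beta_2-\beta_1=e(e-h_1)-1-(e-h_1)=(e-1)(e-h_1)-1$, not $(e-1)(e+1-h_1)$. The corrected quantity is still positive since $e\ge 3$ and $e-h_1\ge 1$, so the induction launches as intended.

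Second, in the case $I=\fm^2$ the identification $\fk_{-1}\cong\fk_1$ is false: $\fk_1=\fm$ is minimally generated by $e\ge 2$ elements, whereas $\fk_{-1}\cong R/\fm^2$ is cyclic. (The displayed identity $M_{-i}\cong(M^*)_i$ you are quoting from the preliminaries should read $M_{-i}\cong((M^*)_i)^*$; with $M=\fk$ this gives $\fk_{-1}\cong\fm^*\cong R/\fm^2$, not $\fm$.) Consequently your formula $\beta_i^R(R/\fm^2)=b_{i+1}$ is wrong; for instance $\beta_1^R(R/\fm^2)=\nu(\fm^2)=1$, not $b_2=e^2-1$. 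The paper's route here is simpler and avoids negative syzygies entirely: the first syzygy of $R/\fm^2$ is $\fm^2\cong\fk$, so $\beta_i^R(R/\fm^2)=b_{i-1}$ for $i\ge 1$. Since $\{b_j\}_{j\ge 0}$ is strictly increasing with $b_0=1$, one still gets $\beta_i\ge i$ and strict increase for $i\ge 1$, so your conclusion stands once the formula is corrected.
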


\begin{proof}
If $I=\fm^2$, then $\beta_i^R(R/\fm^2)=\beta_{i-1}^R(\fm^2)=\beta_{i-1}^R(\fk)$ for all $i\ge 1$, and the conclusion follows from the fact that the sequence  $\{\beta_i^R(\fk)\}_{i\ge 0}$ is strictly increasing. 

Assume now that $I\ne \fm^2$. Since $I \subseteq \fm$, we have $\fm I \subseteq \fm^2$.  Since $I\ne 0$ and $R$ is Gorenstein with socle $\fm^2$,  it follows that $\fm I=\fm^2$  and hence $\fm^2(R/I)=0$. Set $a=\rank_\fk(\fm/I)$. The assumption that $I\ne \fm^2$ gives $a<e$. The Hilbert series of $R/I$ is $H_{R/I}(t)=1 + at$. 
Since $R/I$ is Koszul by Lemma \ref{Ieqm2}, we have: 
\begin{equation}P_{R/I}^R(t)=\frac{1-at}{1-et+t^2}\,.
\end{equation}
Set  $b_i=\beta_i^R(R/I)$ for $i \ge 0$. We have then: 
\begin{equation}1-at=\left( b_0 + b_1t + b_2t^2 + b_3t^3 + \ldots \right) \left( 1-et+t^2 \right).
\end{equation}
From this equation we derive the following information: $b_0=1$, $b_1=e-a$, and $b_{i+2}=eb_{i+1}-b_i$ for $i \ge 0$. Note that $b_1-b_0\ge 0$ because $a<e$. Let $n\ge 1$ and assume $b_{n}-b_{n-1}\ge n-1$. Since $e > 2$  we have 
$$
b_{n+1}-b_{n}=(eb_{n}-b_{n-1})-b_{n}=b_{n}(e-1)-b_{n-1}>b_{n}-b_{n-1} \ge n-1
$$
hence $b_{n+1}-b_{n}\ge n$. 
This inductive argument gives that $b_{i+1}-b_i\ge i$ for all $i\ge 0$. In particular,  $b_i\ge i$ for all $i\ge 0$ and the sequence $\{b_i\}_{i\ge 1}$ is strictly increasing. 
\end{proof}

\begin{lemma}
\label{ExtMN}
Assume $e>2$. If $M$ is Koszul with $\nu(M)=1$, then $\Ext_R^i(\iota_M,N)=0$ for all $i$ with $i > \nu(N)$. Equivalently, $\Tor_i^R(\iota_M,N)=0$ for all $i$ with $i>\nu(N^*)$. 
\end{lemma}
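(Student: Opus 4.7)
By Matlis duality—over the Artinian Gorenstein $R$, $(-)^* = \Hom_R(-,R)$ is an exact involution on finitely generated modules giving $\Ext^i_R(L,M^*) \cong \Tor^R_i(L,M)^*$—the two claims are equivalent, so I focus on the Ext statement. If $M=R$, then $R$ is injective and $\Ext^i_R(R,N)=0$ for $i\geq 1$; since $i>\nu(N)\geq 0$ forces $i\geq 1$, we are done. Otherwise $M=R/I$ with $I$ a nonzero proper ideal; Lemma \ref{Ieqm2} (via Koszulness) gives $I\neq\fm^2$, and since $\Soc R=\fm^2$ lies in every nonzero ideal, $\fm^2\subseteq I$, so $\fm^2 M=0$. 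Writing $N=R^r\oplus N'$ with $\fm^2 N'=0$, the injectivity of $R$ gives $\Ext^i_R(\iota_M,N)=\Ext^i_R(\iota_M,N')$ for $i\geq 1$ while $\nu(N')\leq \nu(N)$; thus I may assume $\fm^2 N=0$ throughout.

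The strategy is to reduce Ext-vanishing to Hom-vanishing on a cosyzygy, then apply Lemma \ref{RmodIhomoN}(2). Let $N\hookrightarrow I^\bullet$ be a minimal injective resolution (each $I^k$ is free over $R$, since free $=$ injective here) and set $X:=N_{-i}=\ker(I^i\to I^{i+1})$, the $i$-th cosyzygy of $N$. Dimension-shifting across the short exact sequences $0\to N_{-(k-1)}\to I^{k-1}\to N_{-k}\to 0$ for $k=1,\ldots,i$ yields a natural surjection $\Hom_R(M,X)\twoheadrightarrow \Ext^i_R(M,N)$ (for $i\geq 1$) compatible with $\iota_M$; hence $\Hom_R(\iota_M,X)=0$ implies $\Ext^i_R(\iota_M,N)=0$. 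In Lemma \ref{RmodIhomoN}(2) applied with target $X$, the hypotheses $\nu(M)=1$ and $\Tor_j^R(\iota_M,\fk)=0$ for every $j$ (by Koszulness) are immediate; $\fm^2 X=0$ holds because minimality prevents $X$ from having a free summand, whence Remark \ref{Lescot} applies. It remains to produce an index $j$ with $\beta_j^R(M)>\beta_j^R(X)$.

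Take $j=i$. Iterating the $\Tor$ long exact sequence across the same $i$ short exact sequences (each $I^{k-1}$ is free, killing positive $\Tor$; and each embedding $N_{-(k-1)}\hookrightarrow I^{k-1}$ lands in $\fm I^{k-1}$ because $\fm^2 N_{-(k-1)}=0$ forces $N_{-(k-1)}\subseteq \Ann_R(\fm^2)\cdot I^{k-1}=\fm I^{k-1}$) gives $\beta_i^R(X)=\beta_0^R(N)=\nu(N)$. Since $I$ is a nonzero proper ideal and $e>2$, Lemma \ref{bettiRmodI} yields $\beta_i^R(M)\geq i$, so for $i>\nu(N)$ we obtain $\beta_i^R(M)\geq i>\nu(N)=\beta_i^R(X)$. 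Lemma \ref{RmodIhomoN}(2) then delivers $\Hom_R(\iota_M,X)=0$, hence $\Ext^i_R(\iota_M,N)=0$. The main technical hurdle is the cosyzygy Betti identity $\beta_i^R(N_{-i})=\nu(N)$: it propagates inductively via Remark \ref{Lescot}, since each cosyzygy has $\fm^2=0$ and therefore embeds into $\fm I^{k-1}$, keeping the Tor computations clean.
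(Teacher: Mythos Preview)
Your proof is correct and follows essentially the same route as the paper's. Both arguments set $X=N_{-i}$, verify $\beta_i^R(X)=\nu(N)$, invoke Lemma~\ref{bettiRmodI} to get $\beta_i^R(M)\ge i>\nu(N)=\beta_i^R(X)$, apply Lemma~\ref{RmodIhomoN}(2) to obtain $\Hom_R(\iota_M,X)=0$, and then dimension-shift through the cosyzygy short exact sequences to conclude $\Ext^i_R(\iota_M,N)=0$; the paper phrases the last step as an induction on $n$ showing $\Ext^n_R(\iota_M,N_{-i+n})=0$, while you package it as a single natural surjection $\Hom_R(M,X)\twoheadrightarrow\Ext^i_R(M,N)$, but the content is identical. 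One notational quibble: what you write as ``$N_{-(k-1)}\subseteq \Ann_R(\fm^2)\cdot I^{k-1}$'' should really be ``$N_{-(k-1)}\subseteq \{x\in I^{k-1}:\fm^2 x=0\}=\fm I^{k-1}$''; the conclusion is the same since $I^{k-1}$ is free and $\Ann_R(\fm^2)=\fm$.
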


\begin{proof}
We may assume $M$, $N$ are indecomposable and not free. In particular, it follows that $\fm^2M=0=\fm^2N$.
 Let $i$ be such that $i>\nu(N)$ and set $L=N_{-i}$. Note that $\fm^2L=0$ and 
$$\nu(N)=\beta_0(N)=\beta_i(N_{-i})=\beta_i(L)\,.$$ 

Since $M$ is cyclic, we have $M\cong R/I$ for a proper ideal $I$. Lemma \ref{bettiRmodI} gives that $\beta_i^R(M)\ge i$, and hence $\beta_i^R(M)>\beta_i^R(L)$, since $i>\nu(N)$.  Since $M$ is Koszul, we have $\Tor_i^R(\iota_M, \fk)=0$ for all $i$, and Lemma \ref{RmodIhomoN} gives that $\Hom_R(\iota_M,L)=0$. 

For each $n\ge 0$ extract from a minimal complete resolution of $N$ the short exact sequence
\begin{equation}0 \lra N_{-i+n+1} \lra R^c \lra N_{-i+n} \lra 0
\end{equation}
with $c=\beta_{-i+n}(N)$, and consider the induced commutative diagram with exact rows: 
\begin{equation}
\label{delta}
\begin{tikzcd}
\Ext_R^n(M,N_{-i+n})\arrow{d}{\Ext_R^n(\iota_M,N_{-i+n})}\arrow{r}{\Delta_{n+1}} & \Ext_R^{n+1}(M,N_{-i+n+1})\arrow{d}{\Ext_R^{n+1}(\iota_M,N_{-i+n+1})}\arrow{r} & \Ext_R^{n+1}(M,R^c)\arrow{d} \\
\Ext_R^n(\fm M,N_{-i+n})\arrow{r} & \Ext_R^{n+1}(\fm M,N_{-i+n+1})\arrow{r} & \Ext_R^{n+1}(\fm M,R^c)
\end{tikzcd}
\end{equation}
We prove by induction on $n$ that $\Ext_R^n(\iota_M,N_{-i+n})=0$ for all $n\ge 0$. This holds for $n=0$, because we know $\Hom_R(\iota_M,L)=0$.

 Assume now that  $n\ge 0$ and $\Ext_R^n(\iota_M,N_{-i+n})=0$. The connecting homomorphism $\Delta_{n+1}$ in \eqref{delta} is surjective because we have $\Ext_R^{n+1}(M,R^c)=0$, since $R$ is Gorenstein artinian. (It is an isomorphism when $n\ge 1$.)
The commutative square on the left gives that $\Ext_R^{n+1}(\iota_M,N_{-i+n+1})=0$. 

We have thus  $\Ext_R^n(\iota_M,N_{-i+n})=0$ for all $n\ge 0$. Taking $n=i$ and noting that $N_0=N$, we obtain the desired conclusion that  $\Ext_R^i(\iota_M,N)=0$ for all $i>\nu(N)$. In particular, we have $\Ext_R^i(\iota_M,N^*)=0$ for all $i>\nu(N^*)$. Finally, note that $\Ext_R^i(\iota_M,N^*)=0$ if and only if  $\Tor^R_i(\iota_M, N)=0$, in view of the canonical isomorphisms given by duality. 
\end{proof}

\begin{lemma}
\label{iff}
Assume $\fm^2M=0$. 
If  $M_1$ does not split off a copy of $\fk$, then 
$\Tor_i^R(\iota_M,N)=0$ for $i\gg 0$ iff $\Tor_i^R(\iota_{M_{1}},N)=0$ for $i\gg 0$.
\end{lemma}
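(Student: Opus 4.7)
The plan is to prove both directions by length-counting, combining the equality criterion of Remark \ref{length-comp}, the standard syzygy shift $\Tor_i^R(M_1,N)\cong\Tor_{i+1}^R(M,N)$ (valid for $i\ge 1$), Lescot's identities from Remark \ref{Lescot}, and the Betti recurrence $\beta_{i+1}^R(N)=e\beta_i^R(N)-\beta_{i-1}^R(N)$ for $i\gg 0$, which follows from $(1-et+t^2)\Po_N^R(t)\in\mathbb{Z}[t]$ in \ref{KoszulGor}. A preliminary observation is that $\fm^2 M_1=0$: since $M_1\subseteq\fm F_0$ for the free cover $F_0$ of $M$ and $\fm^3=0$, one has $\fm^2 M_1\subseteq\fm^3 F_0=0$. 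Hence Remark \ref{length-comp} applies to both $M$ and $M_1$, supplying for each the inequality
$$l(\Tor_i^R(X,N))\ge \nu(X)\beta_i^R(N)-\nu(\fm X)\beta_{i-1}^R(N),$$
with equality at $i$ if and only if $\Tor_i^R(\iota_X,N)=0=\Tor_{i-1}^R(\iota_X,N)$.

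For the direction $(\Rightarrow)$, I would assume $\Tor_i^R(\iota_M,N)=0$ for $i\gg 0$. The equality criterion for $M$ then gives $l(\Tor_i^R(M,N))=\nu(M)\beta_i^R(N)-\nu(\fm M)\beta_{i-1}^R(N)$ for $i\gg 0$, and the syzygy shift converts this into $l(\Tor_j^R(M_1,N))=\nu(M)\beta_{j+1}^R(N)-\nu(\fm M)\beta_j^R(N)$ for $j\gg 0$. Using the Betti recurrence to eliminate $\beta_{j+1}^R(N)$ and then applying Lescot's identities $\nu(M_1)=e\nu(M)-\nu(\fm M)$ and $\nu(\fm M_1)=\nu(M)$ (valid by Remark \ref{Lescot} precisely because $M_1$ does not split off $\fk$), the right-hand side rearranges to $\nu(M_1)\beta_j^R(N)-\nu(\fm M_1)\beta_{j-1}^R(N)$. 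This realizes the equality case of Remark \ref{length-comp} for $M_1$, forcing $\Tor_j^R(\iota_{M_1},N)=0=\Tor_{j-1}^R(\iota_{M_1},N)$ for $j\gg 0$.

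The direction $(\Leftarrow)$ is entirely symmetric: beginning from the length formula for $M_1$ furnished by the equality criterion, the syzygy shift moves the information to $M$, and the identical algebraic manipulation run in reverse produces precisely the equality case of Remark \ref{length-comp} for $M$, yielding $\Tor_j^R(\iota_M,N)=0$ for $j\gg 0$. The only nontrivial computation is the routine identity exchanging $\nu(M)\beta_{j+1}-\nu(\fm M)\beta_j$ with $\nu(M_1)\beta_j-\nu(\fm M_1)\beta_{j-1}$, which is a one-line substitution using the Betti recurrence and Lescot's formulas. There is no real obstacle: the proof is a clean dimension-count built entirely from the infrastructure already developed in Section 1, with the hypothesis that $M_1$ does not split off $\fk$ entering exactly at the invocation of Lescot.
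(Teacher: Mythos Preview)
Your proposal is correct and follows essentially the same route as the paper: both arguments reduce the equivalence to the identity $\nu(M)\beta_{i+1}^R(N)-\nu(\fm M)\beta_i^R(N)=\nu(M_1)\beta_i^R(N)-\nu(\fm M_1)\beta_{i-1}^R(N)$ for $i\gg 0$, established via the Betti recurrence and Lescot's formulas, together with the equality criterion in Remark~\ref{length-comp} and the syzygy shift. The only cosmetic difference is that the paper proves this identity once and deduces both directions simultaneously, whereas you run the two directions separately.
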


\begin{proof}
By \eqref{length}, we have inequalities
\begin{gather}
\label{1}
l(\Tor_{i+1}^R(M,N))\ge \nu(M)\beta_{i+1}(N)-\nu(\fm M)\beta_i(N)\,;\\
\label{2}
l(\Tor_i^R(M_1,N)) \ge \nu(M_1)\beta_{i}(N)-\nu(\fm M_1)\beta_{i-1}(N)\,.
\end{gather}
We have $\Tor_i^R(\iota_M,N)=0$ for $i\gg 0$ if and only if \eqref{1} is an equality for $i\gg 0$, and $\Tor_i^R(\iota_{M_1},N)=0$ for all $i\gg 0$ if and only if \eqref{2} is an equality for $i\gg 0$. Since $\Tor_i^R(M_1,N)\cong \Tor_{i+1}^R(M,N)$,  it suffices to show that 
\begin{equation}
\label{nu}
 \nu(M)\beta_{i+1}(N)-\nu(\fm M)\beta_i(N)= \nu(M_1)\beta_{i}(N)-\nu(\fm M_1)\beta_{i-1}(N)
\end{equation}
for $i\gg 0$. 
Since  the Poincar\'e series of $N$ is rational with denominator $1-et+t^2$, we have 
\begin{equation}
\label{beta1}
\beta_{i+1}(N)=e\beta_i(N)-\beta_{i-1}(N)\qquad \text{for $i\gg 0$}\,.
\end{equation}
Let $i$ be large enough so that \eqref{beta1} holds. 
Using first \eqref{beta1} and then \eqref{f1} and   \eqref{f2}, we establish \eqref{nu} as follows: 
\begin{align*}
 \nu(M)\beta_{i+1}(N)-\nu(\fm M)\beta_i(N)&=\nu(M)(e\beta_i(N)-\beta_{i-1}(N))-\nu(\fm M)\beta_i(N)=\\
&=(\nu(M)e-\nu(\fm M))\beta_i(N)-\nu(M)\beta_{i-1}(N)\\
&=\nu(M_1)\beta_{i}(N)-\nu(\fm M_1)\beta_{i-1}(N)\,.
\end{align*}
As noted above, this finishes the proof. 
\end{proof}

We are now ready to eliminate the assumption that $\nu(M)=1$ in Lemma \ref{ExtMN}. 

\begin{proposition}
\label{ExtMNprop}
If $e>2$ and $M$ is Koszul, then $\Tor_i^R(\iota_M,N)=0$ for $i\gg 0$. 
\end{proposition}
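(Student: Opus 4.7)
The plan is to bootstrap Lemma \ref{ExtMN} (the cyclic case) to arbitrary Koszul $M$ by carving out a cyclic Koszul submodule of $M$ via Lemma \ref{m^2}, and combining $\Tor$-vanishings through Lemma \ref{3parts}(2); residual cases fall to the syzygy shift of Lemma \ref{iff}. The induction is on $\nu(\fm M)$.

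I would first make three preliminary reductions: (i) pass to a faithfully flat extension making $\fk$ algebraically closed (harmless for $\Tor$-vanishing, and preserving Gorenstein, $\fm^3=0$, $e$, and Koszul-ness), so that Lemma \ref{m^2} becomes available; (ii) decompose $M$ by Krull--Schmidt and discard free summands, which contribute zero to $\iota_M$ since $\Tor_i(R,N)=0$ for $i\ge 1$; (iii) similarly discard $\fk$-summands, which contribute zero since $\iota_\fk=0$. After these reductions, $M$ is Koszul with $\fm^2 M=0$ (Remark \ref{Lescot}), and the absence of $\fk$-summands forces $\Soc M=\fm M$, since any $y\in\Soc M\setminus\fm M$ would split off a $\fk$-direct summand.

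Next I induct on $\nu(\fm M)$. The base $\nu(\fm M)=0$ gives $\fm M=0$, so $M$ is a $\fk$-vector space; having removed all $\fk$-summands, $M=0$, and the statement is trivial. For the inductive step with $\nu(\fm M)\ge 1$, the subcase $\nu(M)=1$ is precisely Lemma \ref{ExtMN}. Otherwise $\nu(M)\ge 2$ and I separate two cases. In Case A, where $\nu(M)\ge\nu(\fm M)$, Lemma \ref{m^2} supplies $x\in M\setminus\fm M$ with $\ann(x)\ne\fm^2$; moreover, since $\Soc M=\fm M$ and $x\notin\fm M$, also $\ann(x)\ne\fm$, so $\fm^2\subsetneq\ann(x)\subsetneq\fm$ strictly. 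Setting $A=Rx\cong R/\ann(x)$ and $B=M/A$, the short exact sequence $0\to A\to M\to B\to 0$ has $\ov A\hookrightarrow\ov M$ injective; Lemma \ref{Ieqm2} makes $A$ Koszul, Lemma \ref{3parts}(3) makes $B$ Koszul with $\fm^2 B=0$, and $\nu(\fm B)=\nu(\fm M)-\nu(\fm A)\le\nu(\fm M)-1$ because $\nu(\fm A)\ge 1$. Lemma \ref{ExtMN} applied to $A$ and the inductive hypothesis applied to $B$, combined via Lemma \ref{3parts}(2), yield $\Tor_i^R(\iota_M,N)=0$ for $i\gg 0$. In Case B, where $\nu(M)<\nu(\fm M)$, Lemma \ref{iff} equates the vanishing for $M$ with that for $M_1$; since $\nu(\fm M_1)=\nu(M)<\nu(\fm M)$, the inductive hypothesis applies to $M_1$.

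The main obstacle I anticipate is securing the strict drop $\nu(\fm A)\ge 1$ in Case A, i.e., ensuring $\ann(x)\subsetneq\fm$. This is exactly what the preliminary removal of $\fk$-summands buys, through $\Soc M=\fm M$; without it, the $x$ produced by Lemma \ref{m^2} could be a socle element with $\ann(x)=\fm$, giving $A\cong\fk$ and collapsing the descent. Once this pitfall is avoided, the remaining steps are a straightforward orchestration of the previous lemmas.
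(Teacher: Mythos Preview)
Your proposal is correct and follows essentially the same route as the paper: induction on $\nu(\fm M)$, with the two cases $\nu(M)\ge\nu(\fm M)$ (carve out a cyclic Koszul $A=Rx$ via Lemma~\ref{m^2}, apply Lemma~\ref{ExtMN} to $A$ and induction to $B=M/A$, and combine via Lemma~\ref{3parts}(2)) and $\nu(M)<\nu(\fm M)$ (pass to $M_1$ via Lemma~\ref{iff}). The only cosmetic difference is that the paper reduces to $M$ indecomposable \emph{inside} the inductive step (so that $\ann(x)=\fm$ forces $M\cong\fk$), whereas you strip $\fk$-summands beforehand to force $\Soc M=\fm M$; strictly speaking, your inductive hypothesis should then be stated for all Koszul $M$ with $\fm^2M=0$ and the reductions redone at each step, since $B$ and $M_1$ need not themselves be $\fk$-summand-free.
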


\begin{remark}
If $e=2$, then the conclusion of the proposition may not hold.  Indeed, if $R=k[x,y]/(x^2,y^2)$ and $N=R/(x)$, then a minimal free resolution of $N$ over $R$ is 
$$
\dots \to R\xrightarrow{x}R\to \dots \to R\xrightarrow{x}R\xrightarrow{x}R
$$
 hence $\Tor_i^R(M, N)\cong M$ for any $M$ with $xM=0$. When $M=R/(x)$ as well, the map $\Tor_i^R(\iota_M,N)$ can thus be indentified with the inclusion $\fm M\hookrightarrow M$. 
\end{remark}

\begin{proof}[Proof of Proposition {\rm \ref{ExtMNprop}}]
Let $(R',\fm',\fk')$ be a local ring with $\fk'$ algebraically closed, where $R\to R'$ is an inflation in the sense of \cite[App., Th\'eor\`eme 1, Corollaire]{Bo}, that is: $R'$ is flat over $R$ and $\fm'=R'\fm$. For each finite $R$-module we set $M'=M\otimes_RR'$. As noted in \cite[1.8]{AIS}, $M$ is Koszul if and only if $M'$ is Koszul over $R'$. 
Also, note that we can make the identifications $(\ov M)'=M'/\fm'M'$ and $(\fm M)'=\fm'M'$. The maps $\Tor_i^R(\pi_M,N)$ and $\Tor_i^{R'}(\pi_{M'}, N')$ are simultaneously injective, since $R\to R'$ is faithfully flat, hence  $\Tor_i^R(\iota_M,N)=0$ if and only if $\Tor_i^{R'}(\iota_{M'},N')=0$.  We may assume thus that $\fk$ is algebraically closed. 

We may also assume that $M$ is indecomposable and non-free, and this implies $\fm^2M=0$ as in Remark \ref{Lescot}. 

We prove by induction on $n$ the following statement: \begin{enumerate}
\item[]If $M$ is a Koszul $R$-module such that  $\fm^2M=0$ and $\nu(\fm M)=n$, then $\Tor_i^R(\iota_M,N)=0$ for $i\gg 0$. 
\end{enumerate}
The statement is trivially true when $n=0$, since $\fm M=0$ in this case.  Let $n\ge 1$ and assume that $\Tor_i^R(\iota_M,N)=0$ for $i\gg 0$ for all Koszul modules $M$ with $\fm^2M=0$ and $\nu(\fm M)\le n-1$. 

Let $M$ be a  Koszul $R$-module with $\fm^2M=0$ and  $\nu(\fm M)=n$. We will show that  $\Tor_i^R(\iota_M,N)=0$ for $i\gg 0$. It suffices to establish the conclusion when $M$ is indecomposable, so we will assume this. 

\begin{Case1} Assume $\nu(M)\le n-1$. In this case we have  $\nu(\fm M_{1})=\nu(M)\le n-1$ by Remark \ref{Lescot}.  Since $M$ is Koszul, note that $M_1$ is Koszul and $M_1$ does not split off a copy of $\fk$.  The induction hypothesis, applied to $M_1$,  shows that $\Tor_i^R(\iota_{M_1}
,N)=0$ for $i\gg 0$ and then Lemma \ref{iff} gives $\Tor_i^R(\iota_M,N)=0$ for $i\gg 0$. 
\end{Case1}

\begin{Case2} Assume   $\nu(M)\ge n$.  By Lemma \ref{m^2}, there exists $x\in M\smallsetminus \fm M$ such that  $\ann(x)\ne \fm^2$.  Set  $A=Rx$ and $B=M/A$. Notice the map $\ov A\to \ov M$ induced by the inclusion $A\hookrightarrow M$ is injective, because $x\notin \fm M$.  
 If  $\ann(x)= \fm$ then $A\cong \fk$ and this implies that $M$ splits off a copy of $\fk$, hence $M\cong \fk$, since $M$ is assumed indecomposable. In this case, the statement holds trivially, since $\fm M=0$. We may assume thus $\ann(x)\ne\fm$ as well. 

Since $\ann(x)\ne \fm^2$, Lemma \ref{Ieqm2} shows that $A$ is Koszul. It follows that $\Tor_i^R(\iota_A,N)=0$ for $i\gg 0$ by Lemma \ref{ExtMN}.  Since $\fm^2M=0$, we also have $\fm^2A=0=\fm^2B$ and the top exact row in the commutative diagram \eqref{diag1} is an exact sequence of vector spaces
$$
0\to \fm A\to \fm M\to \fm B\to 0,
$$
which gives
$$
n=\nu(\fm M)=\nu(\fm A)+\nu(\fm B).
$$
Since $\ann(x)\ne \fm$, we have $\nu(\fm A)\ne 0$, and hence  $\nu(\fm B)\le n-1$. Note that $B$ is Koszul by Lemma \ref{3parts}(c) and the  induction hypothesis gives $\Tor^R_i(\iota_B,N)=0$ for $i\gg 0$. Lemma \ref{3parts}(b) gives then $\Tor_i^R(\iota_M,N)=0$ for $i\gg 0$. 
\end{Case2}
The induction argument  is finished, establishing thus the conclusion. 
\end{proof}

\section{Proof of the main theorem}

In this section we prove the main theorem stated in the Introduction. 

\begin{theorem}
\label{primary}
Let $(R,\fm,\fk)$ be a local  Gorenstein ring with  $\fm^3=0\ne \fm^2$ and set $e=\nu(\fm)$. If $e>2$ and  $M$, $N$ are finitely generated $R$-modules, then the following hold: 
\begin{enumerate}[\quad\rm(1)]
\item $\fm\Tor_i^R(M,N)=0$ for  $i\gg 0$; 
\item $\fm\Ext^i_R(M,N)=0$ for $i\gg 0$;
\item $(1-et+t^2)\cdot\To^R_{M,N}(t)\in \mathbb Z[t]$; 
\item $(1-et+t^2)\cdot \Eo_R^{M,N}(t)\in \mathbb Z[t]$. 
\end{enumerate}
\end{theorem}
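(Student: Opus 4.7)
The plan is to reduce parts (1) and (3) to Proposition \ref{ExtMNprop} applied to a sufficiently high syzygy of $M$, and then to derive (2) and (4) via the duality $(-)^* = \Hom_R(-, R)$ available over the Gorenstein Artinian ring $R$.

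First I would show that every finite $R$-module admits a Koszul syzygy. Passing (faithfully flatly) to the completion $\widehat R$, Krull--Schmidt applies, so $\widehat M$ decomposes as $K \oplus \bigoplus_j \fk_{-i_j}^{\,a_j}$ with $K$ Koszul and with finitely many $i_j \ge 1$. Choosing $n \ge \max_j i_j$, every non-Koszul summand $\fk_{-i_j}$ becomes $\fk_{n-i_j}$ in $\widehat M_n$, a non-negative syzygy of the Koszul module $\fk$ and hence Koszul by \ref{KoszulGor}; thus $\widehat M_n$, and so $M_n$ by \cite[1.8]{AIS}, is Koszul. Moreover $\fm^2 M_n = 0$ for $n \ge 1$ as recorded in Remark \ref{Lescot}.

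Proposition \ref{ExtMNprop} applied to $M_n$ then yields an index $i_0$ with $\Tor_i^R(\iota_{M_n}, N) = 0$ for all $i \ge i_0$. By Remark \ref{length-comp}, for such $i$ the module $\Tor_i^R(M_n, N)$ embeds into the $\fk$-vector space $\Tor_i^R(\ov{M_n}, N)$, so $\fm \cdot \Tor_i^R(M_n, N) = 0$ and
$$
\nu\bigl(\Tor_i^R(M_n, N)\bigr) \;=\; \nu(M_n)\,\beta_i^R(N) - \nu(\fm M_n)\,\beta_{i-1}^R(N) \qquad (i \ge i_0 + 1).
$$
Since $\Tor_{i+n}^R(M, N) \cong \Tor_i^R(M_n, N)$ for $i \ge 1$, shifting indices delivers part (1). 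For part (3), I would write $\To^R_{M,N}(t)$ as a polynomial of degree $< i_0 + n$ plus a tail that is a $\mathbb Z[t]$-linear combination of $\Po^R_N(t)$ and $t\,\Po^R_N(t)$; since $\Po^R_N(t)$ has denominator $1-et+t^2$ by \ref{KoszulGor}, so does $\To^R_{M,N}(t)$.

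For parts (2) and (4) I would invoke the natural isomorphism $\Ext^i_R(M, N) \cong \Tor_i^R(M, N^*)^*$: self-injectivity of $R$ makes $(-)^*$ exact on finite modules with $M^{**} \cong M$, and tensor-hom adjunction applied to a minimal free resolution $F_\bullet$ of $M$ yields $\Hom_R(F_\bullet, N) \cong (F_\bullet \otimes_R N^*)^*$. Applying (1) and (3) with $N^*$ in place of $N$ shows that $\Tor_i^R(M, N^*)$ is a finite-dimensional $\fk$-vector space for $i \gg 0$; as $R$ is a cogenerator for finite modules, a finite module $X$ is annihilated by $\fm$ iff $X^*$ is, and then the $\fk$-dimensions agree. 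Hence $\fm \Ext^i_R(M, N) = 0$ and $\nu(\Ext^i_R(M, N)) = \nu(\Tor_i^R(M, N^*))$ for $i \gg 0$, delivering (2) and (4). I expect the reduction to a Koszul syzygy in the first step to be the main obstacle: one must control the non-Koszul indecomposable summands of $\widehat M$ and check that passage to the completion preserves Koszul-ness and the invariants appearing in the theorem.
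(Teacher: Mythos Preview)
Your proposal is correct and follows essentially the paper's approach: reduce to a Koszul module (the paper does this by a case split on indecomposable summands---$M$ Koszul versus $M\cong\fk_{-i}$, the latter handled by passing to the syzygy $M_i\cong\fk$ and citing Sj\"odin---while you uniformly pass to a high syzygy $M_n$; these amount to the same reduction), then apply Proposition~\ref{ExtMNprop} and Remark~\ref{length-comp} for (1) and (3), and deduce (2) and (4) by the duality $\Ext^i_R(M,N)\cong\Tor_i^R(M,N^*)^*$. One simplification: since $\fm^3=0$ forces $R$ to be Artinian and hence already complete, your passage to $\widehat R$ is unnecessary and Krull--Schmidt applies over $R$ itself.
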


\begin{proof}
The statements (2) and (4) follow from the statements (1), respectively (3) by duality. We prove below (1) and (3). 

We may assume that both $M$ and $N$ are indecomposable and not free. In particular $\fm^2M=0=\fm^2N$. Let $j\ge 0$. Since $\Tor_{i+j}^R(M,N)\cong \Tor_{i}^R(M_j,N)$ for all $i\ge 1$, the statement (1) holds if and only if $\fm\Tor_i^R(M_j,N)=0$ for $i\gg 0$ and the statement (3) holds if and only if $(1-et+t^2)\cdot\To^R_{M_j,N}(t)\in \mathbb Z[t]$. 

Assume first that $M$ is not Koszul, hence $\fk \cong M_j$ for some $j\ge 1$ (see Section \ref{KoszulGor}). In view of the above observation, it suffices to prove the statement for $M=\fk$ and in this case (1) is clear, and (3) follows from the fact that $\To_{\fk,N}^R(t)=\Po^R_N(t)$ is rational with denominator $1-et+t^2$, as proved by Sj\"odin \cite{Sj}. 

Assume now that $M$ is a Koszul module.  Proposition \ref{ExtMNprop} gives that  there exists an integer $s$ such that $\Tor_i^R(\iota_M,N)=0$ for $i\ge s $.  By Remark \ref{length-comp}, we have that $\fm \Tor_i^R(M,N)=0$ for all $i\ge s$, proving (1), and 
$$
\nu(\Tor^R_{i}(M,N))=l(\Tor_i^R(M,N))=\nu(M)\beta_{i}^R(N)-\nu(\fm M)\beta_{i-1}^R(N)\quad\text{ for all $i>s$}\,.
$$
We have thus 
$$
\To^R_{M,N}(t) =\sum_{i=0}^s\nu(\Tor_i^R(M,N))t^i+\nu(M)\sum_{i\ge s+1}\beta_{i}^R(N)t^i-\nu(\fm M)t\sum_{i\ge s+1}\beta_{i-1}^R(N)t^{i-1}\\
$$
It follows from here that $\To^R_{M,N}(t)-H_M(-t)\Po^R_N(t)\in\mathbb Z[t]$. The conclusion of (3)  follows, using again the fact that $\Po^R_N(t)$ is rational with denominator $1-et+t^2$. 
 \end{proof}

When  $l(M\otimes_RN)<\infty$, we define a modified version of the series $\Eo_R^{M,N}(t)$ and $\To^R_{M,N}(t)$ as follows: 
\begin{gather*}
\mathcal E^{M,N}_R(t)=\sum_{i=0}^\infty l\left(\Ext^i_R(M,N)\right)t^i \in\mathbb Z[[t]]\, \\
\mathcal T_{M,N}^R(t)=\sum_{i=0}^\infty l\left(\Tor_i^R(M,N)\right)t^i \in\mathbb Z[[t]]\,.
\end{gather*}
Under the assumptions of Theorem \ref{primary},  parts (1) and (2)  of its statement give that  
$$\nu(\Ext^i_R(M,N))=l(\Ext^i_R(M,N))\quad\text{and}\quad \nu(\Tor_i^R(M,N))=l(\Tor_i^R(M,N))
$$
for $i\gg 0$, hence we have the following Corollary. 

\begin{corollary}
\label{end}
Under the hypotheses of Theorem {\rm \ref{primary}}, the following hold: 
\begin{enumerate}[\quad\rm(1)]
\item $(1-et+t^2)\cdot\mathcal E^{M,N}_R(t)\in \mathbb Z[t]$;
\item $(1-et+t^2)\cdot\mathcal T_{M,N}^R(t)\in \mathbb Z[t]$. \qed
\end{enumerate}
\end{corollary}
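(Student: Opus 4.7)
The plan is to reduce the claim for the length-based series $\mathcal E^{M,N}_R(t)$ and $\mathcal T^R_{M,N}(t)$ to the generator-count series $\Eo^{M,N}_R(t)$ and $\To^R_{M,N}(t)$ which Theorem \ref{primary} has already handled. The bridge is the standard fact that for a finite $R$-module $L$ with $\fm L=0$, one has $L\cong \fk^{\nu(L)}$ and therefore $l(L)=\nu(L)$.

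First, I would observe that $R$ is Artinian since $\fm^3=0$, so every Tor and Ext module of finite modules is of finite length; in particular the series $\mathcal E^{M,N}_R(t)$ and $\mathcal T^R_{M,N}(t)$ are well-defined without further hypotheses on $M,N$. Next, invoke Theorem \ref{primary}(1)--(2) to obtain an integer $s$ such that
\begin{equation*}
\fm\cdot\Tor_i^R(M,N)=0=\fm\cdot\Ext_R^i(M,N)\qquad\text{for every } i\ge s.
\end{equation*}
For those $i$, the modules $\Tor_i^R(M,N)$ and $\Ext_R^i(M,N)$ are $\fk$-vector spaces, so
\begin{equation*}
l(\Tor_i^R(M,N))=\nu(\Tor_i^R(M,N))\quad\text{and}\quad l(\Ext_R^i(M,N))=\nu(\Ext_R^i(M,N)).
\end{equation*}

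Consequently the series $\mathcal T^R_{M,N}(t)$ and $\To^R_{M,N}(t)$ differ only in their terms of degree less than $s$, hence their difference is a polynomial; likewise for $\mathcal E^{M,N}_R(t)$ and $\Eo^{M,N}_R(t)$. Applying Theorem \ref{primary}(3)--(4), multiplying by the polynomial $1-et+t^2$ preserves membership in $\mathbb Z[t]$, yielding both conclusions of the corollary.

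There is no substantial obstacle: all the work has been done in Theorem \ref{primary}, whose parts (1) and (2) were established precisely to enable this passage from $\nu$ to $l$. The entire argument is a one-paragraph bookkeeping exercise bundling together the vanishing $\fm\cdot\Tor_i^R(M,N)=0$ (for $i\gg 0$), the identification $l=\nu$ on $\fk$-vector spaces, and the rationality already proven for the $\nu$-series.
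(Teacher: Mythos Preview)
Your proposal is correct and matches the paper's own argument essentially verbatim: the paper likewise observes that Theorem~\ref{primary}(1)--(2) force $\nu(\Ext^i_R(M,N))=l(\Ext^i_R(M,N))$ and $\nu(\Tor_i^R(M,N))=l(\Tor_i^R(M,N))$ for $i\gg 0$, so the length-based series differ from the $\nu$-series by polynomials, and then Theorem~\ref{primary}(3)--(4) finish the job. Your added remark that $R$ is Artinian (so all Tor and Ext have finite length) is a helpful clarification the paper leaves implicit.
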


\begin{remark}
Several classes of local rings, including the one discussed in this paper, are known to satisfy the property that the Poincar\'e series of all finite modules are rational, sharing a common denominator;  see \cite{RossiSega} for a large class of Gorenstein artinian rings. In all known cases, such rings are  homomorphic images of a complete intersection via a Golod homomorphism. As mentioned also in \cite{Ro}, it seems reasonable to expect that similar rationality results for the series $\mathcal T_{M,N}^R(t)$, $ E^{M,N}_R(t)$, $\To^R_{M,N}(t)$ and $\Eo^R_{M,N}(t)$ hold for other  such classes. 
\end{remark}


\begin{thebibliography}{99}
\bibitem{AB}  L.~L.~Avramov, R.-O.~Buchweitz, \textit{Support varieties and cohomology over complete intersections},  Invent. Math. {\bf 142} (2000); 285--318. 

\bibitem{AIS} L.~L.~Avramov, S.~B.~Iyengar, L.~M.~\c Sega,
\textit{Free resolutions over short local rings}, 
J. London Math. Soc. \textbf{78} (2008); 459--476.

\bibitem{Bo} N.~Bourbaki,
	\textit{Alg\`ebre commutative.} IX: Anneaux locaux r\'eguliers complets, Masson, Paris, 1983.

\bibitem{Gu} T.~H.~Gulliksen, \textit{A change of ring theorem with applications to Poincar\'e series and intersection multiplicity}, Math. Scand.{\bf  34} (1974); 167--183.


\bibitem{HI}
    J.~Herzog, S.~Iyengar,
		\textit{Koszul modules},
		J. Pure Appl. Algebra {\bf 201} (2005); 154--188.
		
\bibitem{Le} J.~Lescot, 
\textit{Asymptotic properties of Betti numbers of modules over certain
 rings}, J. Pure Appl. Algebra {\bf 38} (1985); 287--298.
 
 \bibitem{Lof} C.~L\"ofwall,
 	\textit{The Poincar\'e series for a class of local rings}, Mathematiska Institutionen, Stockholms Universitet,  Preprint {\bf 8} (1975).

\bibitem{Ro}
J.~E.~Roos, 
\textit{ Homology of free loop spaces, cyclic homology and nonrational Poincar\'e-Betti series in commutative algebra},  Algebra -- some current trends (Varna, 1986), 173--189, Lecture Notes in Math., {\bf 1352}, Springer, Berlin, 1988. 

\bibitem{RossiSega} M.~E.~Rossi, L.~M.~Sega, \textit{Poincar\'e series of modules over compressed Gorenstein local rings},  Adv. Math. {\bf 259} (2014); 421--447.

\bibitem{S1} L.~M.~\c Sega, \textit{Vanishing of cohomology over Gorenstein rings of small codimension}, 
    Proc. Amer. Math. Soc. {\bf 131} (2003); 2313--2323. 

\bibitem{Sj}		
G.~Sj\"odin,
		\textit{The Poincar\'e series of modules over a local Gorenstein ring with} $\fm^3=0$, Mathematiska Institutionen, Stockholms Universitet, Preprint {\bf 2} (1979).

\end{thebibliography}
\end{document}